\documentclass[12pt,reqno]{amsart}
\usepackage{color}
\usepackage{graphicx}
\usepackage{enumitem}
\usepackage{url}
\usepackage{amssymb}
\usepackage{esint}
\usepackage{xcolor}
\usepackage{mathtools}
\usepackage{comment}
\usepackage{tikz}
\usepackage{slashed}
\usepackage{mathrsfs}

\usepackage[margin = 1in] {geometry}

\usepackage{dsfont}

\newtheorem{theorem}{Theorem}[section]

\newtheorem{proposition}[theorem]{Proposition}
\newtheorem{lemma}[theorem]{Lemma}
\newtheorem{corollary}[theorem]{Corollary}

\theoremstyle{definition}
\newtheorem{example}[theorem]{Example}
\newtheorem{definition}[theorem]{Definition}
\newtheorem*{remark*}{Remark}

\newtheorem{remark}[theorem]{Remark}

\numberwithin{equation}{section}

\usepackage[sorted-cites]{amsrefs} 
\usepackage[colorlinks=true, allcolors=blue]{hyperref} 

\theoremstyle{plain}
\newtheorem{mthm}{Theorem}

\newcommand{\D}{\textrm{d}}
\newcommand{\Sec}{\operatorname{Sec}}  
\newcommand{\Ric}{\operatorname{Ric}}  
\newcommand{\scalM}{\operatorname{scal}_{g_M}} 
\newcommand{\supp}{\operatorname{supp}} 
\newcommand{\Avcw}{ \widehat{A}\textrm{-vcw}_{2} } 
\newcommand{\Af}{ \widehat{\textbf{A}} } 
\newcommand{\ch}{\textrm{ch}} 
\newcommand{\ind}{{\rm ind}} 
\newcommand{\sflow}{{\rm sf}} 

\makeatletter
\@namedef{subjclassname@2020}{\textup{2020} Mathematics Subject Classification}
\makeatother

\title[Bottom spectrum, vertical $\widehat{A}$-cowaist and scalar curvature rigidity]{Bottom spectrum, vertical $\widehat{A}$-cowaist and scalar curvature rigidity}

\author[Daoqiang Liu]{Daoqiang Liu}

\address{Chern Institute of Mathematics \& LPMC, Nankai University, Tianjin 300071, China}
\email{\href{mailto:dqliu@nankai.edu.cn}{dqliu@nankai.edu.cn}}
\urladdr{\href{https://www.dqliu.cn}{www.dqliu.cn}}

\begin{document}

\subjclass[2020]{Primary 53C21, 53C27; Secondary 53C23, 58J30, 58J32}

\keywords{Callias operator, scalar curvature, vertical $\widehat{A}$-cowaist, bottom spectrum}

\begin{abstract}
    We introduce the vertical \(\widehat{A}\)-cowaist, a codimension-one
    invariant for partitioned manifolds. It extends the concept of infinite
    vertical \(\widehat{A}\)-cowaist for bands to arbitrary partitioned
    manifolds, which may be noncompact and have compact boundary.
    We establish a sharp inequality relating the scalar curvature, the bottom
    spectrum of the Laplacian, and this invariant.
    As an application, we obtain a high-dimensional analogue of
    Munteanu-Wang's bottom spectrum estimate.
    We also prove a quantitative strengthening of Anghel's theorem together
    with a boundary version, as well as a Calabi-Yau type theorem that goes
    beyond the dimensional restrictions of the earlier \(\mu\)-bubble method.
    Our approach is based on deformed Dirac operators.
\end{abstract}

\maketitle





\section{Introduction}\label{sec:intro}

The study of scalar curvature is a central topic in differential geometry,
closely linked to topology, index theory, and global analysis. A key question
is how scalar curvature bounds interact with large-scale invariants on
complete manifolds --- particularly on noncompact ones, where topology and
spectrum combine in subtle ways.

A particularly powerful obstruction to positive scalar curvature comes from
the Dirac operator. Lichnerowicz~\cite{Lic63} showed that a closed spin
manifold with positive scalar curvature must have vanishing
$\widehat{A}$-genus. Gromov and Lawson~\cite{GL83} then introduced
enlargeability and used index-theoretic methods to obtain far-reaching
obstructions. These results reveal a deep connection between scalar curvature
and topology; for a comprehensive overview, we refer to Gromov's \emph{Four
lectures on scalar curvature}~\cite{Gro23}.

A complementary direction concerns the relationship between scalar curvature and spectral invariants. For a complete Riemannian manifold $(M,g_M)$, the
Laplacian $\Delta$ is self-adjoint (see~\cite{Gaffney54}). The spectrum $\sigma(M):=\sigma(-\Delta)$ of $M$ is a closed subset of $[0,\infty)$. The bottom spectrum is defined by
\[
\lambda_1(M,g_M):=\inf \sigma(M),
\]
and it admits the variational characterization
\begin{equation}\label{defn:bottom_spectrum}
    \lambda_1(M,g_M)=\inf_{u\in C_c^{\infty}(M)\setminus\{0\}}
    \frac{\int_M |{\rm d} u|^2_{g_M}\,\mathrm{d}V_{g_M}}{\int_M u^2\,\mathrm{d}V_{g_M}},
\end{equation}
where $C_c^{\infty}(M)$ denotes the space of compactly supported smooth
functions.  The quantity $\lambda_1(M,g_M)$ captures fundamental large-scale
geometric information, and understanding its interplay with scalar curvature
remains a subtle and active problem, particularly on noncompact manifolds.


\subsection{A sharp geometric inequality}

A classical theorem of Anghel~\cite{An93} states that a noncompact spin manifold that can be partitioned by a closed hypersurface with nonzero \(\widehat{A}\)-genus 
does not admit a complete metric of positive scalar curvature. In other words, the topological obstruction to positive scalar curvature is already detected by the separating hypersurface. Our \emph{vertical \(\widehat{A}\)-cowaist} extends the idea of Cecchini-Zeidler~\cite{CZ24} --- who introduced the concept of infinite vertical \(\widehat{A}\)-cowaist to study distance estimates in the setting of odd-dimensional bands --- to arbitrary partitioned manifolds, in order to quantify such obstructions.

For the precise definition, let \(N \subset M\) be a closed hypersurface partitioning \(M\), so that \(M = M_+ \cup M_-\) and \(N = M_+ \cap M_-\). A Hermitian vector bundle \(\mathcal{E}\) over \(M\) is called \textit{compatible} if it is a trivial bundle with trivial connection at infinity (i.e., outside a compact subset) and near the boundary. A smooth map \(\rho_M: M\to U(l)\) with values in the unitary group $U(l)$ is called \textit{compatible} if it is trivial, i.e., locally constant at infinity and near the boundary.

\begin{itemize}
\item If \(m\) is odd, a compatible bundle \(\mathcal{E}\) over \(M\) is called a \emph{vertical \(\widehat{A}\)-admissible bundle} for the partition \((M,N)\) if
\[
\int_N \Af(N) \wedge \ch(\mathcal{E}|_N) \neq 0.
\]

\item If \(m\) is even, a compatible bundle \(\mathcal{E}\) over \(M\) together with a compatible map \(\rho_M \in C^\infty(M,U(l))\) is called a \emph{vertical \(\widehat{A}\)-admissible pair} \((\mathcal{E},\rho_M)\) for the partition \((M,N)\) if
\[
\int_N \Af(N) \wedge \ch(\mathcal{E}|_N) \wedge \ch(\rho_M|_N) \neq 0,
\]
where \(\ch(\rho_M)\) denotes the odd Chern character form (cf.~\cite{Ge93}, see also \cite{Zh01}*{(1.50)}).
\end{itemize}

For a Hermitian bundle \(\mathcal{E}\), denote by \(R^{\mathcal{E}}\) the curvature tensor of the connection on \(\mathcal{E}\).
Its norm is defined as
\[
\| R^{\mathcal{E}} \|_{\infty}
:= \sup_{p\in M} \sup_{ \substack{v_1,v_2\in T_p M\\ |v_1|=|v_2|=1} } |R^{\mathcal{E}}(v_1\wedge v_2)|,
\]
where \(|R^{\mathcal{E}}(v_1\wedge v_2)|\) denotes the operator norm of the endomorphism \(R^{\mathcal{E}}(v_1\wedge v_2)\).
For a smooth map \(\rho_M\in C^{\infty}(M,U(l))\), the associated curvature is
\begin{equation}\label{eq:defn_rho_curvature}
   R^{\rho_M}:=\frac{1}{4}(\rho_M^{-1}(\D\rho_M))^2, 
\end{equation}
and its norm \(\|R^{\rho_M}\|_{\infty}\) is defined analogously.

\begin{definition}
The \textit{vertical \(\widehat{A}\)-cowaist} of the pair \((M,N)\) is
\[
 \Avcw(M,N):=
\begin{cases}
\displaystyle \Big(\inf_{\mathcal{E}} \|R^{\mathcal{E}}\|_{\infty}\Big)^{-1}, & \text{if } m \text{ is odd}, \\[14pt]
\displaystyle \Big(\inf_{(\mathcal{E},\rho_M)} \bigl\{ \|R^{\mathcal{E}}\|_{\infty} + \|R^{\rho_M}\|_{\infty} \bigr\}\Big)^{-1}, & \text{if } m \text{ is even},
\end{cases}   
\]
where \(\mathcal{E}\) ranges over all vertical \(\widehat{A}\)-admissible bundles for \((M,N)\) and \((\mathcal{E},\rho_M)\) ranges over all vertical \(\widehat{A}\)-admissible pairs for \((M,N)\).
\end{definition}

By pulling back a suitable Hermitian vector bundle $\mathcal{E}_0$ (and, when \(m\) is even, a suitable unitary-valued map \(\rho_0\)) from the sphere \(\mathbf{S}^{m-1}\) via a degree one map $\Theta$, we obtain an \(\widehat{A}\)-admissible bundle $\Theta^*\mathcal{E}_0$ 
(respectively, an \(\widehat{A}\)-admissible pair $(\Theta^*\mathcal{E}_0,\Theta^*\rho_0)$)\footnote{Here
\(\Theta^*\mathcal{E}_0\) is called \(\widehat{A}\)-admissible if
\(\int_M \Af(M)\wedge \ch(\Theta^*\mathcal{E}_0)\neq 0\); the pair
\((\Theta^*\mathcal{E}_0,\Theta^*\rho_0)\) is called \(\widehat{A}\)-admissible
if \(\int_M \Af(M)\wedge \ch(\Theta^*\mathcal{E}_0) \wedge
\ch(\Theta^*\rho_0) \neq 0\).} over \(N\).  
Via the projection \(p:U\to N\) of a small tubular neighborhood \(U=N\times(-\delta,\delta)\), where \(\delta>0\), of \(N\), this bundle (or pair) is pulled back to \(U\).
Using appropriate cut-off functions, it extends to a vertical
\(\widehat{A}\)-admissible bundle (or pair) over the whole manifold \(M\).
Hence the set of such objects over \(M\) is nonempty, and by construction
\(\Avcw(M,N) > 0\).

\begin{definition}
    The \textit{vertical \(\widehat{A}\)-cowaist} of \(M\) itself is
    \[
        \Avcw(M) := \sup_{N} \Avcw(M,N),
    \]
    the supremum taken over all closed partitioning hypersurfaces
    \(N \subset M\).
\end{definition}

This definition generalizes the one in \cite{CZ24}*{Definition~7.3}, which is
itself an analogue of Gromov's \(K\)-cowaist~\cite{Gro96}. The simplest example
of a manifold with infinite vertical \(\widehat{A}\)-cowaist is one that can be
partitioned by a closed hypersurface \(N\) with \(\widehat{A}(N) \neq 0\); for
further examples, we refer to~\cite{CZ24}.

Recall the notion of ends. Let \(M\) be a smooth manifold and fix an exhaustion
\(K_1 \subseteq K_2 \subseteq \cdots\) by compact submanifolds. Set
\(U_i = M \setminus K_i\). An \emph{end} of \(M\) is a sequence
\(\{V_i\}_{i=1}^\infty\) where each \(V_i\) is a connected component of
\(U_i\) and \(V_1 \supseteq V_2 \supseteq \cdots\); the number of ends is
independent of the exhaustion.

Manifolds with ends appear naturally in geometric analysis (see,
e.g.,~\cite{CG72}, \cite{Cai91}, \cite{LT87}). For our purposes, the key fact is that a
noncompact manifold with at least two ends can always be separated by a compact
hypersurface (see~\cite{CRZ23}*{Lemma~2.4}). Hence the vertical
\(\widehat{A}\)-cowaist is well defined under the assumptions of the first main
theorem.

\begin{mthm}\label{mthm:codimension_one}
    Let $(M,g_M)$ be an $m$-dimensional complete noncompact Riemannian spin manifold with at least two ends. 
    Then
    \begin{equation}\label{eq:codimension_one_formula}
        \inf_M \scalM + \frac{4m}{m-1}\, \lambda_1(M,g_M) \leq \frac{2m(m-1)}{\Avcw(M)} .
    \end{equation}
\end{mthm}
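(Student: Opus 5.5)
We argue by contradiction using a deformed (Callias-type) Dirac operator twisted by a vertical $\widehat{A}$-admissible bundle (or pair). Fix a closed partitioning hypersurface $N$ and an admissible $\mathcal{E}$ (resp.\ $(\mathcal{E},\rho_M)$). Since both sides of \eqref{eq:codimension_one_formula} are continuous in the data, it suffices to show
\[
\inf_M \scalM + \tfrac{4m}{m-1}\lambda_1 \le 2m(m-1)\bigl(\|R^{\mathcal{E}}\|_\infty (+\|R^{\rho_M}\|_\infty)\bigr).
\]
Suppose instead that $\inf \scalM + \frac{4m}{m-1}\lambda_1 > 2m(m-1)\kappa$, where $\kappa$ is the curvature norm. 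Choose $\lambda<\lambda_1$ close to it. By the variational characterization \eqref{defn:bottom_spectrum} (Fischer-Colbrie--Schoen), there is a positive solution $u$ of $\Delta u+\lambda u=0$ on $M$. We use $u$ as a conformal/warping weight. Concretely, we consider the spinor operator $D_u$ obtained from the twisted Dirac operator by inserting a weight $u^{\alpha}$ with $\alpha$ of order $\frac{m}{m-1}$ chosen optimally. This is the standard trick in the Munteanu--Wang style estimates: $\lambda_1$ enters through $-\Delta u/u=\lambda$, and the refined Kato inequality supplies the factor $\frac{m}{m-1}$, so that $\frac{4m}{m-1}\lambda$ appears. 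This produces a modified Lichnerowicz/Schrödinger--Lichnerowicz estimate of the form
\[
\|D\psi\|^2 \ge \int \Bigl(\tfrac{m}{4(m-1)}\bigl(\scalM+\tfrac{4m}{m-1}\lambda\bigr)-\tfrac{m}{2}\kappa\Bigr)\,|\psi|^2 u^{\beta}\cdots.
\]
Here the twisting curvature term $\mathcal{R}^{\mathcal{E}}$ is bounded by $\frac{m(m-1)}{2}\kappa$ via Clifford multiplication, combined with the $\frac{m-1}{m}$ improvement of the spinorial Penrose/Friedrich inequality.

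Next, to see the index, we build a Callias operator $\mathcal{B}=D_u\otimes\sigma + f\,\Phi$. Here $f$ is a potential that changes sign across $N$: it is $\equiv \pm\Lambda$ deep in $M_\pm$ and has $|df|$ small relative to $f^2$. In the even case, $\Phi$ incorporates $\rho_M$ via the standard $\begin{pmatrix}0&\rho^{-1}\\ \rho&0\end{pmatrix}$ trick; this explains the $\|R^{\rho_M}\|_\infty$ term in the definition. The Callias index theorem (Anghel, Cecchini--Zeidler) gives
\[
\ind \mathcal{B}=\int_N \Af(N)\wedge\ch(\mathcal{E}|_N)\bigl(\wedge\ch(\rho_M|_N)\bigr)\neq 0.
\]
Two facts make the index well defined: $\mathcal{E}$ and $\rho_M$ are trivial at infinity, and $M$ has at least two ends, so $N$ separates with both sides noncompact.

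On the other hand, the strict inequality yields a uniform positive lower bound for the potential part plus the weighted Lichnerowicz term. After choosing $f$ with $f^2-|df|>0$ controlled, this forces $\ker\mathcal{B}=0$ and $\ker\mathcal{B}^*=0$, which is a contradiction. Taking the infimum over admissible data and the supremum over $N$ gives the theorem.

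The main obstacle is the weighted spinorial estimate. One must combine $u$ (which is only positive, not bounded), the refined Kato constant, and the Callias potential so that no cross terms $\langle \nabla u,\cdot\rangle$ destroy the sharp constant $\frac{4m}{m-1}$. There is also the issue of Fredholmness: the weighted operator must remain Fredholm even though $u$ may decay or grow at infinity. The first approach I would try is to conjugate, $\psi\mapsto u^{\alpha}\psi$, and complete the square in $\int |\nabla\psi|^2$ against $|\nabla u|^2/u^2$ using $\Delta u=-\lambda u$. Fredholmness would be handled by replacing $u$ with a solution on large compact domains with Dirichlet data. One then either works on a compact exhaustion, via a boundary Callias index with admissible boundary conditions where the bundle is trivial, or lets $\Lambda\to\infty$ to dominate the ends.
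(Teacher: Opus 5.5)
Your proposal has a genuine gap. The step that carries the whole theorem is never carried out: a weighted Lichnerowicz estimate producing $\scalM+\frac{4m}{m-1}\lambda$ without destructive cross terms, together with Fredholmness and index invariance of the weighted operator $D_u$. You flag this yourself as the main obstacle and only list strategies (conjugation by $u^{\alpha}$, exhaustions with boundary conditions), each of which creates new problems. Even your displayed target does not match your own ingredients. The twisting bound $\frac{m(m-1)}{2}\kappa$, multiplied by the factor $\frac{m}{m-1}$ from the Penrose identity, gives $\frac{m^{2}}{2}\kappa$, not $\frac{m}{2}\kappa$.

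The missing idea is that no weight and no Fischer-Colbrie--Schoen eigenfunction is needed. A nonzero $u\in\ker(\mathcal{D}+f\sigma)$ lies in the domain of a self-adjoint Fredholm operator, so $u\in L^{2}$ and $|u|$ is directly an admissible test function for $\lambda_1(M,g_M)$. Write $|\nabla u|^{2}=|\mathcal{P}u|^{2}+\frac1m|\mathcal{D}u|^{2}$ and use the refined Kato argument for kernel elements: the quantities $v_i=c(e^i)\nabla_{e_i}u+\frac fm\sigma u$ satisfy $\sum_i v_i=0$. This gives $|\mathcal{P}u|^{2}\ge\frac{1}{4\gamma}|\mathrm{d}|u||^{2}+\alpha_1 f^{2}|u|^{2}$ for every $\gamma>\frac{m-1}{4m}$, and hence
\[
0\ge\Bigl(\tfrac{m}{4\gamma(m-1)}\lambda_1(M,g_M)+\tfrac{m}{4(m-1)}\inf_M\scalM-\tfrac{m^{2}}{2}\|R^{\mathcal{E}}\|_{\infty}-\alpha_2\sup_M|\mathrm{d}f|\Bigr)\|u\|_{L^{2}}^{2}.
\]
Letting $\gamma\downarrow\frac{m-1}{4m}$ produces exactly the coefficient $\frac{4m}{m-1}$, with no weight-induced cross terms and no Fredholm issues.

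Two further steps fail as written.

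\emph{The potential.} A potential that changes sign across $N$ must vanish somewhere. So neither ``$|\mathrm{d}f|$ small relative to $f^{2}$'' nor $f^{2}-|\mathrm{d}f|>0$ can hold near the zero set of $f$. Sending $\Lambda\to\infty$ only enlarges the error term $c(\mathrm{d}f)\sigma$. The potential should instead be small, $f=\varepsilon\psi$, since its role is purely topological. One discards $f^{2}\ge0$, bounds $\langle u,c(\mathrm{d}f)\sigma u\rangle\ge-\varepsilon|\mathrm{d}\psi|\,|u|^{2}$, and lets $\varepsilon\to0$.

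\emph{The even case.} For even $m$, $\rho_M$ cannot enter as a zeroth-order potential such as $\begin{pmatrix}0&\rho^{-1}\\ \rho&0\end{pmatrix}$. A compactly supported zeroth-order perturbation is relatively compact and does not change the index, and $\rho_M$ is locally constant at infinity. Since $U(l)$ is connected, you could replace $\rho_M$ by a map that agrees with it outside a compact set but is $\equiv1$ near $N$. This leaves your index unchanged, while $\int_N\Af(N)\wedge\ch(\mathcal{E}|_N)\wedge\ch(\rho_M|_N)$ becomes $0$. Such a potential would also produce terms linear in $\mathrm{d}\rho_M$, not the quadratic $R^{\rho_M}$.

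The correct mechanism is the spectral flow of $\mathcal{B}_f(t)=(1-t)\mathcal{B}_f+t\rho_M^{-1}\mathcal{B}_f\rho_M$, i.e.\ twisting by the connections $\mathrm{d}+t\rho_M^{-1}\mathrm{d}\rho_M$. Their curvature $-t(1-t)(\rho_M^{-1}\mathrm{d}\rho_M)^{2}$ has norm at most $\|R^{\rho_M}\|_{\infty}$. Nonvanishing spectral flow yields some $t_0$ and a nonzero $u\in\ker\mathcal{B}_f(t_0)$, to which the same estimate applies.
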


The following example exhibits a family of warped product metrics for which
equality holds in \eqref{eq:codimension_one_formula}, showing that the
inequality is sharp and that rigidity fails in general.

\begin{example}\label{example:mthm_codimension_one}
Let \(N\) be a closed Ricci-flat spin manifold of dimension \(m-1\) with \(\widehat{A}(N)\neq 0\) (for instance, a \(K3\) surface when \(m=5\)). Consider the warped product manifold \(M = N \times \mathbf{R}\) equipped with the metric
\[
g_M = \cosh^{\frac{2}{a}}(at)\, g_N + dt^{2},
\]
where \(a \in \bigl[\frac{m-1}{2}, \frac{m}{2}\bigr]\) (cf.~\cite{MWang24}*{Example~1.5}). Its scalar curvature is
\[
\scalM = -m(m-1) + (m-1)(m-2a)\cosh^{-2}(at) \in \bigl[-m(m-1),\, -2a(m-1)\bigr],
\]
and the bottom spectrum of the Laplacian satisfies \(\lambda_1(M,g_M) = \frac{(m-1)^{2}}{4}\).
Since \(\widehat{A}(N)\neq 0\), we have \(\Avcw(M) = \infty\).

As \(\cosh^{-2}(at) \to 0\) when \(t \to \pm\infty\), we obtain
\(\inf_M \scalM = -m(m-1)\) for every \(a\) in this interval;
for \(a = \frac{m}{2}\) the scalar curvature is constant and the
infimum is attained.
Thus,
\[
\inf_M \scalM + \frac{4m}{m-1}\,\lambda_1(M,g_M)
    = -m(m-1) + m(m-1) = 0,
\]
which shows that equality holds in \eqref{eq:codimension_one_formula}.
Consequently, the coefficient \(\frac{4m}{m-1}\) in front of
\(\lambda_1(M,g_M)\) cannot be improved.
\end{example}

For an \(m\)-dimensional simply connected complete manifold with sectional
curvature \(\Sec \le \kappa < 0\), McKean~\cite{McKean70} proved the lower
bound \(\lambda_1 \ge -\frac{(m-1)^2}{4}\kappa\). If an \(m\)-dimensional
complete manifold satisfies \(\Ric \ge (m-1)\kappa\) with \(\kappa \le 0\),
then Cheng~\cite{Ch75} established the upper bound
\[
\lambda_1 \le -\frac{(m-1)^2}{4}\kappa .
\]

In the case of \(3\)-manifolds with a (non-positive) scalar curvature lower bound, Munteanu-Wang \cite{MWang23}, \cite{MWang24} obtained sharp bottom
spectrum estimates, assuming additionally a Ricci curvature lower bound and
topological conditions coming from the level-set method for Green's
functions.

Theorem~\ref{mthm:codimension_one} relates scalar curvature to spectral
geometry on complete manifolds. As a direct consequence, we obtain an
analogue of Cheng's estimate in which the Ricci curvature lower bound is
replaced by a scalar curvature lower bound that may even be positive.

\begin{theorem}\label{thm:bottom_spectrum_all_dimension}
Let $(M,g_M)$ be an $m$-dimensional complete noncompact Riemannian spin manifold with at least two ends, such that
$\scalM \ge \kappa$ on $M$ for some constant $\kappa \le \dfrac{2m(m-1)}{\Avcw(M)}$.
Then the bottom spectrum satisfies
\[
    \lambda_1(M,g_M) \le \frac{m-1}{4m} \bigg( \frac{2m(m-1)}{\Avcw(M)} - \kappa  \bigg).
\]
\end{theorem}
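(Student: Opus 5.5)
This is a direct consequence of Theorem~\ref{mthm:codimension_one}. The plan is to substitute the lower bound $\scalM \ge \kappa$ into \eqref{eq:codimension_one_formula} and solve for $\lambda_1(M,g_M)$.

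First, the hypotheses of Theorem~\ref{mthm:codimension_one} are exactly those assumed here: $M$ is complete, noncompact, spin, of dimension $m$, and has at least two ends. By \cite{CRZ23}*{Lemma~2.4}, $M$ is then partitioned by some closed hypersurface, so $\Avcw(M)$ is well defined and lies in $(0,\infty]$. We interpret $2m(m-1)/\Avcw(M)$ as $0$ when $\Avcw(M)=\infty$.

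Since $\scalM \ge \kappa$ pointwise, we have $\inf_M \scalM \ge \kappa$. If $\inf_M \scalM = -\infty$, this still holds trivially, but then \eqref{eq:codimension_one_formula} gives no information. That case cannot occur, however, because $\kappa$ is a finite constant. Inserting $\inf_M\scalM \ge \kappa$ into \eqref{eq:codimension_one_formula} gives
\[
\kappa + \frac{4m}{m-1}\,\lambda_1(M,g_M) \le \frac{2m(m-1)}{\Avcw(M)}.
\]
Multiplying by $\frac{m-1}{4m}>0$ (note $m\ge 2$, since $M$ has a partitioning hypersurface) yields the claimed bound.

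The hypothesis $\kappa \le 2m(m-1)/\Avcw(M)$ is not needed for the algebra. It only guarantees that the right-hand side is nonnegative, so the statement is consistent with $\lambda_1\ge 0$; indeed, Theorem~\ref{mthm:codimension_one} itself forces this inequality on $\kappa$. There is no real obstacle in this argument. The only points needing care are the conventions when $\Avcw(M)=\infty$ and the finiteness of $\inf_M\scalM$, and both are handled above.
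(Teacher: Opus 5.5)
Your proposal is correct and matches the paper, which states this result as a direct consequence of Theorem~\ref{mthm:codimension_one}: inserting $\inf_M \scalM \ge \kappa$ into \eqref{eq:codimension_one_formula} and multiplying by $\frac{m-1}{4m}$ gives the bound. Your side remarks are also accurate: the convention $2m(m-1)/\Avcw(M)=0$ when $\Avcw(M)=\infty$ is the right one, and the hypothesis $\kappa \le 2m(m-1)/\Avcw(M)$ is automatically implied by Theorem~\ref{mthm:codimension_one} because $\lambda_1 \ge 0$.
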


A noncompact manifold that can be partitioned by a closed hypersurface
with nonzero \(\widehat{A}\)-genus automatically has at least two ends and
infinite vertical \(\widehat{A}\)-cowaist. A particularly interesting special
case of Theorem~\ref{thm:bottom_spectrum_all_dimension} is the following.

\begin{corollary}
    Let \((M,g_M)\) be an $m$-dimensional complete noncompact Riemannian manifold that can be partitioned by a closed hypersurface with nonzero \(\widehat{A}\)-genus. If the scalar curvature satisfies \(\scalM \ge \kappa\) for some constant \(\kappa\le 0\), then
\[
    \lambda_1(M,g_M) \le -\,\frac{m-1}{4m}\,\kappa.
\]
\end{corollary}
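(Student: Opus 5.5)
The corollary follows from Theorem~\ref{thm:bottom_spectrum_all_dimension}, which in turn is a rearrangement of Theorem~\ref{mthm:codimension_one}. The plan is to check the hypotheses of Theorem~\ref{thm:bottom_spectrum_all_dimension} and then substitute \(\Avcw(M)=\infty\).

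\emph{Step 1.} Let \(N\subset M\) be the given closed partitioning hypersurface with \(\widehat{A}(N)\neq 0\). Since \(N\) is closed and separates \(M=M_+\cup M_-\), and \(M\) is noncompact, \(M\) has at least two ends, as noted just before the corollary. Spin-ness of \(M\) is implicit, since the \(\widehat{A}\)-genus and the Dirac operator techniques require it. We assume \(M\) is spin, as in Theorem~\ref{mthm:codimension_one}.

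\emph{Step 2.} We show \(\Avcw(M)=\infty\).
\begin{itemize}
\item If \(m\) is odd, take \(\mathcal{E}\) to be the trivial line bundle with the trivial connection. It is compatible, and \(\int_N \Af(N)\wedge\ch(\mathcal{E}|_N)=\widehat{A}(N)\neq0\). Since \(\|R^{\mathcal{E}}\|_\infty=0\), we get \(\Avcw(M,N)=\infty\).
\item If \(m\) is even, then \(\dim N=m-1\) is odd, so \(\widehat{A}(N)=0\) automatically. Hence a closed hypersurface with nonzero \(\widehat{A}\)-genus forces \(m\) to be odd, and this case does not arise.
\end{itemize}
In either case \(\Avcw(M)\ge\Avcw(M,N)=\infty\).

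\emph{Step 3.} Since \(\Avcw(M)=\infty\), we have \(2m(m-1)/\Avcw(M)=0\). The hypothesis \(\kappa\le 0\) is then exactly the condition \(\kappa\le 2m(m-1)/\Avcw(M)\) required in Theorem~\ref{thm:bottom_spectrum_all_dimension}. That theorem gives
\[
\lambda_1(M,g_M)\le \frac{m-1}{4m}(0-\kappa) = -\frac{m-1}{4m}\kappa.
\]
Equivalently, one can argue directly from \eqref{eq:codimension_one_formula}: it gives \(\kappa+\frac{4m}{m-1}\lambda_1\le \inf_M\scalM+\frac{4m}{m-1}\lambda_1\le 0\).

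\emph{Main obstacle.} The only real point is the convention that the trivial flat bundle is admissible. This gives \(\inf\|R^{\mathcal{E}}\|_\infty=0\) and hence an infinite cowaist, which the paper itself states is the simplest example. Everything else is substitution.
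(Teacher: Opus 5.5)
Your proposal is correct and matches the paper's intended route: the corollary is the special case $\Avcw(M)=\infty$ of Theorem~\ref{thm:bottom_spectrum_all_dimension} (equivalently of \eqref{eq:codimension_one_formula}), with the trivial flat line bundle as the admissible witness. Your observation that $\widehat{A}(N)\neq 0$ forces $m$ to be odd disposes of the even case cleanly.

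One small repair is needed in Step~1. The facts that $N$ is closed and separating and $M$ is noncompact do not by themselves give two ends; a round sphere in $\mathbf{R}^m$ is a counterexample. You need that neither $M_+$ nor $M_-$ is compact, which holds because otherwise $N$ would be an oriented boundary and hence $\widehat{A}(N)=0$.
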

                                                                          
Example~\ref{example:mthm_codimension_one} shows that this inequality is sharp.


\subsection{A quantitative version of Anghel's theorem}
We now present the following quantitative strengthening of Anghel's
theorem~\cite{An93}.

\begin{mthm}\label{mthm:quantitative_Anghel}
Let $(M,g_M)$ be an $m$-dimensional complete noncompact Riemannian spin manifold that is partitioned by a closed hypersurface $N$ with nonzero $\widehat{A}$-genus.  
Suppose that $\scalM > 0$ on a compact subset $K \subset M$ containing $N$.
\begin{enumerate}
    \item  
    If $\lambda_1(M,g_M)=0$, then
    \[
        \inf_M \scalM <0.
    \] 
    \item
    If $\lambda_1(M,g_M)>0$, then
    \[
        \inf_M \scalM \leq - \frac{4m}{m-1} \lambda_1(M,g_M).
    \]
\end{enumerate}
\end{mthm}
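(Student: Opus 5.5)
The plan is to deduce both statements from the mechanism behind Theorem~\ref{mthm:codimension_one}, using that $\widehat{A}(N)\neq 0$ forces $\Avcw(M,N)=\infty$. With this, the right-hand side of \eqref{eq:codimension_one_formula} vanishes, and Theorem~\ref{mthm:codimension_one} gives
\[
\inf_M \scalM \le -\frac{4m}{m-1}\lambda_1(M,g_M).
\]
This is exactly statement (2). It also yields $\inf_M\scalM\le 0$ in case (1). So the only real content is the \emph{strict} inequality in (1), and this is where the hypothesis $\scalM>0$ on $K\supset N$ has to enter.

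For (1), I argue by contradiction and assume $\inf_M\scalM\ge 0$, so that $\scalM\ge 0$ on $M$ and $\scalM>0$ on $K$. The first step is to recast the proof of Theorem~\ref{mthm:codimension_one} as a statement about the Schr\"odinger operator
\[
L=-\Delta+\tfrac{m-1}{4m}\scalM .
\]
I expect that the argument (deformed Dirac operator, Lichnerowicz formula, and the refined Kato inequality $|\D|\varphi||^2\le \frac{m-1}{m}|\nabla\varphi|^2$ for harmonic spinors, which is the source of the constant $\frac{4m}{m-1}$) actually shows that the bottom of the spectrum of $L$ is $\le 0$ when $\Avcw(M)=\infty$. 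Under our assumption $L\ge -\Delta\ge 0$, so the bottom of $\sigma(L)$ is exactly $0$.

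By Fischer-Colbrie--Schoen, there is then a smooth positive function $w$ on $M$ with $Lw=0$. Since $\scalM\not\equiv 0$, the function $w$ is not constant. I would use $w$ as a conformal or warping weight. More precisely, I would run the Callias-type index argument for the Dirac operator along $N$, with potential $f\sigma$ built from a cut-off function that changes sign across $N$, on the weighted spinor bundle, or equivalently for the operator $w^{-\alpha}Dw^{\alpha}$ with $\alpha$ chosen to match the Kato constant. Since $\widehat{A}(N)\neq 0$, the Callias index theorem gives a nonzero $L^2$ element $\varphi$ in the kernel, for potentials $f$ of arbitrarily small gradient.

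Integrating the Lichnerowicz--Kato identity against $w$ and using $Lw=0$, all terms become nonnegative. This forces $\scalM|\varphi|^2\equiv 0$, up to an error controlled by $\|\D f\|_\infty$. Since $\scalM>0$ on $K$, this says that $\varphi$ is small on $K$ in $L^2$. On the other hand, the index contribution localises near $N\subset K$, so $\varphi$ cannot concentrate away from $K$. Letting $\|\D f\|_\infty\to 0$ and passing to a limit (after normalising on $K$), I obtain a harmonic spinor vanishing on the open set $\mathrm{int}\,K$. Unique continuation then forces it to vanish identically, which contradicts the normalisation. This proves $\inf_M\scalM<0$.

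The main obstacle is this last limiting step: in the case $\lambda_1=0$ there is no spectral gap to absorb the error terms. One has to show that the normalised kernel elements do not lose their mass at infinity while $\|\D f\|_\infty\to 0$. I would first try to handle this by choosing $f$ constant $\pm$ outside a large compact set, but with $|f|$ bounded below there by a fixed constant. The potential term $f^2|\varphi|^2$ then provides coercivity at infinity, while the positivity of $\scalM$ on $K$ provides coercivity near $N$. If this fails, the fallback is a Kazdan-type conformal deformation that spreads the positivity of $\scalM$ on $K$ to a metric with $\scalM\ge\varepsilon>0$ on a larger compact set, followed by an iteration of the argument.
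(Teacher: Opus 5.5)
Your argument for (2) is correct, and it is a slightly different route from the paper's. You feed $\Avcw(M)=\infty$ directly into Theorem~\ref{mthm:codimension_one}: the trivial line bundle is vertical $\widehat{A}$-admissible with zero curvature, and $M$ has two ends because neither $M_\pm$ can be compact when $\widehat{A}(N)\neq 0$. The paper instead proves the strict inequality of Proposition~\ref{pro:spectral_Anghel} for each $\gamma>\frac{m-1}{4m}$ and then lets $\gamma\to\frac{m-1}{4m}$. Your route shows that (2) does not use the positivity of $\scalM$ on $K$.

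Part (1), however, has a genuine gap. Your main route (Fischer-Colbrie--Schoen function $w$, conjugated operator $w^{-\alpha}\mathcal{D}w^{\alpha}$, $\|\D f\|_\infty\to 0$, normalising on $K$, unique continuation) is not carried out. You never say how $Lw=0$ enters the Weitzenb\"ock identity. Nor do you explain why the $L^2$-index of the conjugated operator still equals $\widehat{A}(N)$ when $w$ is not bounded above and below. More importantly, the step you flag does fail as described. Once $\|\D f\|_\infty\to0$, the coercivity at infinity degenerates, and normalised kernel elements can spread out to infinity. ``The index localises near $N$'' is not a substitute for an estimate. Your fallback pairs $|f|\ge c>0$ at infinity with $\|\D f\|_\infty\to 0$. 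That forces the transition region of $f$ out of $\{\scalM>0\}$ and recreates exactly the same problem.

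The missing idea is that no limit, no weight and no unique continuation are needed. Assume, for contradiction, that $\scalM\ge 0$, and use the trivial line bundle, so $\mathscr{R}=\frac14\scalM$. Then the only term in \eqref{eq:even_connection_spectral_estimate} that can be negative is $\alpha_2\langle u,c(\D f)\sigma u\rangle\ge-\alpha_2|\D f|\,|u|^2$. This term is supported on $\supp(\D f)$, which you are free to place inside a compact neighbourhood $K'$ of $K$ with $\scalM\ge\sigma_0>0$ on $K'$; such a $K'$ exists by continuity and compactness.

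So take $f=\varepsilon\psi$ with $\psi=\pm1$ on $M_\pm\setminus K'$ and $\supp(\D\psi)\subset K'$; the paper additionally takes $\psi=0$ on $K$. Then \emph{fix} $\varepsilon>0$ with $\varepsilon\alpha_2\sup|\D\psi|<\frac{m\sigma_0}{4(m-1)}$. The pointwise coefficient $\frac{m}{4(m-1)}\scalM+\alpha_2(f^2-|\D f|)$ is then $\ge\frac{m\sigma_0}{4(m-1)}-\varepsilon\alpha_2\sup|\D\psi|>0$ on $K'$, and $\ge\alpha_2\varepsilon^2>0$ off $K'$. Hence $u\equiv 0$, contradicting $\ind(\mathcal{B}_f)=\widehat{A}(N)\neq 0$.

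The gradient error is absorbed pointwise by the local positivity of $\scalM$, and the fixed $\varepsilon^2$ supplies the coercivity at infinity you were looking for. This is exactly the paper's proof of Proposition~\ref{pro:spectral_Anghel}; in case (1) even the $\lambda_1$-term is not needed.
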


In other words, the presence of a separating hypersurface with nonzero
$\widehat{A}$-genus obstructs the persistence of positive scalar curvature:
the local positivity on $K$ forces $\scalM$ to become negative somewhere
on $M$.

For a cylinder we obtain the following consequence, which relates to a
conjecture of Rosenberg-Stolz \cite{RS94}*{Conjecture~7.1}.

\begin{corollary}\label{thm:Stolz_Rosenberg_cylinder_case}
Let $N$ be a closed spin manifold with nonzero $\widehat{A}$-genus. Then $M = N\times\mathbf{R}$ does not admit a complete metric with nonnegative scalar curvature that is strictly positive on a neighbourhood of $N$. In particular, $M$ carries no complete metric of positive scalar curvature.
\end{corollary}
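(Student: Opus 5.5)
The corollary will follow directly from Theorem~\ref{mthm:quantitative_Anghel} applied to $M = N\times\mathbf{R}$, partitioned by the closed hypersurface $N\times\{0\}$. Since $\widehat{A}(N)\neq 0$, this partition has nonzero $\widehat{A}$-genus. $M$ is noncompact and spin because $N$ is spin. Arguing by contradiction, suppose $g_M$ is a complete metric on $M$ with $\scalM\ge 0$ everywhere and $\scalM>0$ on an open neighbourhood $U$ of $N\times\{0\}$.

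First, we produce the compact set $K$ required by Theorem~\ref{mthm:quantitative_Anghel}. Since $N$ is compact, some slab $K:=N\times[-\delta,\delta]$ with $\delta>0$ lies inside $U$. This $K$ is compact, contains $N$, and $\scalM>0$ on $K$.

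Next, we apply the theorem, splitting according to the bottom spectrum. If $\lambda_1(M,g_M)=0$, part (1) gives $\inf_M\scalM<0$. If $\lambda_1(M,g_M)>0$, part (2) gives $\inf_M\scalM\le -\frac{4m}{m-1}\lambda_1(M,g_M)<0$. In both cases $\scalM$ is negative somewhere, which contradicts $\scalM\ge 0$.

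For the last assertion, a complete metric of positive scalar curvature is in particular nonnegative everywhere and strictly positive near $N$, so it is excluded by what we just proved. All the analytic work is contained in Theorem~\ref{mthm:quantitative_Anghel}. The only points needing care are checking its hypotheses: completeness, noncompactness, the spin structure, the partition property of $N\times\{0\}$, and the choice of $K$. None of these is a real obstacle.
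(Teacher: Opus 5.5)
Your proposal is correct and follows the route the paper intends: the corollary is stated as an immediate consequence of Theorem~\ref{mthm:quantitative_Anghel}, applied to the partition of $N\times\mathbf{R}$ by $N\times\{0\}$, where both cases $\lambda_1(M,g_M)=0$ and $\lambda_1(M,g_M)>0$ force $\inf_M\scalM<0$. Your slab is harmless but not needed, since $K=N\times\{0\}$ is already a compact set containing $N$ on which $\scalM>0$.
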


\begin{remark}
Cecchini~\cite{Cec20} proved a related result for closed spin manifolds
with nonvanishing Rosenberg index.
\end{remark}

We now establish a splitting theorem for complete noncompact manifolds
with nonnegative scalar curvature.

\begin{theorem}\label{thm:splitting_theorem}
Let \((M,g_M)\) be an \(m\)-dimensional complete noncompact Riemannian spin manifold with nonnegative scalar curvature. Suppose that \(M\) can be partitioned by a hypersurface with nonzero \(\widehat{A}\)-genus. Then \(M\) is isometric to a product \((N\times\mathbf{R},\,g_N+dt^{2})\), where \((N,g_N)\) is a closed Ricci-flat spin manifold.
\end{theorem}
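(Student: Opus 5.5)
Our plan is to reduce to Theorem~\ref{mthm:quantitative_Anghel} and Theorem~\ref{mthm:codimension_one}, and then to obtain the rigidity from an analysis of the equality case in the Dirac-operator argument. Let \(N\) be the partitioning hypersurface with \(\widehat{A}(N)\neq 0\). We first show that \(\scalM\equiv 0\). If \(\scalM(p)>0\) at some point \(p\), then, following Kazdan's deformation argument, we would perturb the metric near a compact set containing both \(p\) and \(N\). The aim is a complete metric with \(\scalM\ge 0\) that is strictly positive on a compact set \(K\supset N\). Part (1) or (2) of Theorem~\ref{mthm:quantitative_Anghel} then gives \(\inf_M \scalM<0\), a contradiction.

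A cleaner alternative is to work directly with the Callias-type operator \(D+f\sigma\) that underlies Theorem~\ref{mthm:quantitative_Anghel}. We choose \(f\) to be \(\pm\) a small constant far out in the two ends \(M_\pm\). Because \(\widehat{A}(N)\neq0\), the index theorem for partitioned manifolds gives a nonzero \(L^2\) harmonic spinor \(\psi\) for every admissible \(f\). The Lichnerowicz-type estimate
\[
\int |\nabla\psi|^2+\tfrac14\scalM|\psi|^2+(f^2-|\D f|)|\psi|^2\le 0
\]
then forces \(\scalM|\psi|^2\equiv0\). Letting the slope of \(f\) go to zero and normalizing \(\psi\) on a fixed compact set yields, in the limit, a nonzero spinor. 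In this limit \(\nabla\psi\to 0\) in \(L^2_{loc}\) and \(\scalM=0\) on the support of \(\psi\).

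The second step, which we expect to be the main obstacle, is extracting a parallel object from this limit. We would rescale, taking \(f=\epsilon\phi\) with \(\phi\) a fixed Lipschitz function equal to \(\pm1\) at infinity. The limiting spinor \(\psi_0\) should satisfy \(D\psi_0=0\) and \(\nabla\psi_0=0\) locally. The delicate point is proving nontriviality of the limit. For this we would use the fact that the index is concentrated near \(N\): a unique-continuation estimate bounds \(\|\psi_\epsilon\|_{L^2(K)}\) from below relative to its total mass. A nonzero parallel spinor gives \(\Ric=0\), and \(\scalM\equiv0\) follows. Without the limit argument one would instead fall back on the first step via Kazdan deformation. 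Note that a parallel spinor on a noncompact manifold does not by itself split off a line.

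To produce the splitting, we use that \(M\) has at least two ends. With \(\Ric\ge0\) established, the Cheeger--Gromoll splitting theorem applies: a complete manifold with \(\Ric\ge 0\) and two ends contains a line, hence \(M\cong N'\times\mathbf{R}\) isometrically with \(N'\) complete and Ricci-flat. Since \(M\) has exactly the topology of a partition by a compact hypersurface and has at least two ends, \(N'\) must be compact: otherwise \(N'\times\mathbf{R}\) would have one end. Finally, \(N'\) is homologous to \(N\), since both separate the two ends, so it is spin with \(\widehat{A}(N')=\widehat{A}(N)\neq0\). This gives \((M,g_M)\cong(N\times\mathbf{R},g_N+dt^2)\) with \(N\) closed and Ricci-flat.
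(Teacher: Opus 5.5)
Your final step matches the paper: two ends give a line, Cheeger--Gromoll splits off $\mathbf{R}$, and the cross-section is compact because $N'\times\mathbf{R}$ with $N'$ noncompact has only one end. The problem is the input that step needs, namely $\Ric\ge 0$. Your first step cannot supply it. Kazdan's deformation combined with Theorem~\ref{mthm:quantitative_Anghel} only rules out $\scalM(p)>0$, i.e.\ it gives $\scalM\equiv 0$, and that is useless for Cheeger--Gromoll. So the ``fallback on the first step'' does not close the argument as you set it up. The paper instead applies Kazdan's result directly to the assumption $\Ric\not\equiv 0$. A complete metric with $\scalM\ge 0$ that is not Ricci-flat deforms to a complete metric of positive scalar curvature, which contradicts Anghel's theorem, so $\Ric\equiv 0$ in one line.

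Everything in your proof therefore rests on the second step, and as written it fails exactly where you flag the difficulty. First, for fixed $f$ the estimate does not force $\scalM|\psi|^2\equiv 0$, since $f^2-|\mathrm{d}f|<0$ wherever $f$ is small and $\mathrm{d}f\neq 0$. More seriously, a uniform lower bound on $\|\psi_\epsilon\|_{L^2(K)}$ relative to the total mass is false. On $N\times\mathbf{R}$ with $N$ Ricci-flat and $\widehat{A}(N)\neq 0$, the kernel elements are roughly $e^{-\epsilon|t|}$ times a parallel spinor, so $\|\psi_\epsilon\|^2_{L^2(K)}/\|\psi_\epsilon\|^2_{L^2(M)}=O(\epsilon)$.

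The repair is to normalize locally, on a compact set $V$ outside which $\phi=\pm1$; this is the only place the negative term lives. With $f=\epsilon\phi$ and $\scalM\ge 0$ you get
\[
\int_M\Bigl(|\nabla\psi_\epsilon|^2+\tfrac14\scalM|\psi_\epsilon|^2+\epsilon^2\phi^2|\psi_\epsilon|^2\Bigr)\,dV\le\epsilon\,\sup_M|\mathrm{d}\phi|\int_V|\psi_\epsilon|^2\,dV .
\]
If $\psi_\epsilon$ vanished on $V$, the left side would force $\psi_\epsilon=0$ where $|\phi|=1$, hence everywhere. So you may set $\|\psi_\epsilon\|_{L^2(V)}=1$, which gives $\|\nabla\psi_\epsilon\|^2_{L^2(M)}\le \epsilon\sup|\mathrm{d}\phi|$. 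Kato's inequality and a Poincar\'e inequality on connected compact domains $K\supseteq V$ then give uniform $H^1(K)$ bounds. Rellich's theorem and a diagonal argument produce a limit $\psi_0$ with $\nabla\psi_0=0$ and $\|\psi_0\|_{L^2(V)}=1$. A nonzero parallel spinor forces $\Ric\equiv 0$.

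With this repair your proof is correct, and it genuinely differs from the paper's at the Ricci-flatness step. It replaces Kazdan's deformation theorem for complete noncompact manifolds, a substantial external input, by the Callias index theorem together with a Lichnerowicz and compactness argument. It also yields a parallel spinor rather than just $\Ric\equiv 0$, and it makes your first step superfluous. The paper's route is shorter because it delegates exactly this analysis to Kazdan and Anghel. Your closing identification of $\widehat{A}(N')$ with $\widehat{A}(N)$ is harmless but not needed for the statement.
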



\subsection{Complete metrics of nonnegative scalar and mean curvature}

We begin with the boundary analogue of Theorem~\ref{mthm:quantitative_Anghel}.

\begin{mthm}\label{mthm:boundary_Anghel}
Let $(M,g_M)$ be an $m$-dimensional complete noncompact Riemannian spin
manifold with compact mean-convex boundary, and suppose that $M$ can be
partitioned by a closed hypersurface $N$ with nonzero
$\widehat{A}$-genus. Assume that $\scalM > 0$ on a compact subset
$K \subset M$ containing $N$. Then
\[
\inf_M \scalM < 0.
\]
\end{mthm}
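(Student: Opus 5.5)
Argue by contradiction: assume $\inf_M \scalM \ge 0$, so $\scalM \ge 0$ on $M$ and $\scalM \ge s_0 > 0$ on $K$, with $\partial M$ compact and mean convex ($H \ge 0$). The strategy follows the deformed Dirac (Callias) approach of Theorem~\ref{mthm:quantitative_Anghel}, now on a manifold with boundary. The extra input is a local boundary condition (a chirality/MIT-bag type condition) whose boundary term in the Schrödinger--Lichnerowicz formula is $\tfrac12 H|u|^2$ plus a potential contribution, and hence nonnegative under mean convexity.

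First I set up the index problem. Write $M = M_+\cup M_-$ with $N = M_+\cap M_-$. Since $\partial M$ is compact and $N$ is closed, after enlarging $K$ we may assume $\partial M\subset K$. Choose a Lipschitz function $\phi$ on $M$ with the following properties: $\phi$ equals a positive constant $\Lambda$ on the noncompact part of $M_+\setminus K$ and $-\Lambda$ on the noncompact part of $M_-\setminus K$; $\phi$ changes sign across $N$; $|\D\phi|$ is supported in $K$; and $\phi$ is constant (equal to $\pm\Lambda$, or the appropriate sign) near $\partial M$. Let $S$ be the spinor bundle (doubled if needed, $S\oplus S$ when $m$ is even) and form the Callias operator $B_\phi = D + \phi\,\sigma$, where $\sigma$ is an involution anticommuting with Clifford multiplication. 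Impose the boundary condition $\chi u = u$ with $\chi = \pm \nu\cdot\sigma$, the sign matched to the sign of $\phi$ on each boundary component. By the Callias index theorem for manifolds with compact boundary, together with the relative index/cobordism argument (the boundary components can be pushed into $M_\pm$ without affecting the index), the index equals $\widehat{A}(N)\neq 0$. This yields a nonzero $L^2$ solution $u$ of $B_\phi u=0$.

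Next I apply the integrated Lichnerowicz formula with boundary:
\[
0=\int_M |B_\phi u|^2 \ge \int_M |\nabla u|^2+\Big(\tfrac{\scalM}{4}+\phi^2-|\D\phi|\Big)|u|^2 + \int_{\partial M}\Big(\tfrac{H}{2}\pm\phi\Big)|u|^2 .
\]
The boundary sign is chosen so that $\pm\phi \ge 0$, so the boundary integral is nonnegative. The key step is to pick $\phi$ so that $\tfrac{\scalM}{4}+\phi^2-|\D\phi| \ge 0$ everywhere and $>0$ on a set of positive measure. On $K$, positivity of $\scalM$ absorbs $|\D\phi|$, provided $\phi$ transitions from $-\Lambda$ to $\Lambda$ slowly relative to $s_0$. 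This is achievable since $K$ is compact: take $\phi = \varepsilon\,\psi$ with $\psi$ a fixed sign-changing profile, and choose $\varepsilon$ so that $\varepsilon|\D\psi| < s_0/4$. Outside $K$, $\phi$ is a nonzero constant, so $\phi^2>0$ and the potential is positive there. Hence the potential is $\ge 0$ everywhere and strictly positive outside $K$, and the inequality forces $u\equiv 0$ outside $K$. Then $\nabla u = 0$ gives $|u|$ constant, so $u\equiv 0$, a contradiction.

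The main obstacle is the index computation with boundary. I must check that the chosen boundary condition makes $B_\phi$ Fredholm, which requires $\phi$ to be invertible at infinity (it is, equal to $\pm\Lambda$), and that the index remains $\widehat{A}(N)$ despite $\partial M$. I would handle this by a deformation argument. Each component of $\partial M$ lies in $M_+$ or in $M_-$, where $\phi$ has a fixed sign, and in such a region the Callias operator with the matching chirality condition has no contribution. Concretely, a local cut-and-paste (splitting) theorem lets us replace a collar of each boundary component by a cylindrical end on which $\phi$ is constant, which contributes zero index. This reduces the index to the boundaryless computation already used for Theorem~\ref{mthm:quantitative_Anghel}.
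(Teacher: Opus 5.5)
Your argument is essentially the paper's: a Callias operator $\mathcal{D}+\varepsilon\psi\,\sigma$ on the doubled spinor bundle, with chirality boundary conditions whose signs make the boundary term $\frac{m-1}{2}H-sf$ nonnegative under mean convexity, index $\widehat{A}(N)\neq0$, and the integrated Lichnerowicz formula forcing the kernel element to vanish. The doubling is needed because $\widehat{A}(N)\neq0$ forces $m\equiv 1\pmod{4}$, so $m$ is odd, not even as you wrote. The plain formula you use suffices, since the paper's Penrose/Kato refinement plays no role for this statement. Your sign rule $sf\le 0$ on $\partial M$ is the correct one; the paper's printed choice $s=+1$ on $\partial_+M$ appears reversed relative to its own boundary term $\frac m2 H-\alpha_2 sf$.

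One slip needs fixing: do not enlarge $K$ to contain $\partial M$, because $\scalM\ge s_0>0$ is only known on the original $K$. Instead, let $\psi$ change sign inside a thin tubular neighborhood of $N$ on which $\scalM\ge s_0$ (by continuity), and keep $\psi\equiv\pm1$ elsewhere. Then $\frac{\scalM}{4}+\varepsilon^2\psi^2-\varepsilon|\D\psi|>0$ everywhere for small $\varepsilon$.
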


\begin{corollary}\label{cor:boundary_vertical}
Let $M$ be an $m$-dimensional noncompact spin manifold with compact
boundary, and suppose that $M$ can be partitioned by a closed
hypersurface with nonzero $\widehat{A}$-genus. Then $M$ admits no
complete metric with positive scalar curvature and mean-convex boundary.
\end{corollary}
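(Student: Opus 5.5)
The corollary follows from Theorem~\ref{mthm:boundary_Anghel} by contradiction. Suppose $M$ carries a complete metric $g_M$ with $\scalM>0$ everywhere and mean-convex boundary. Let $N$ be the given partitioning closed hypersurface with $\widehat{A}(N)\neq 0$. Then all hypotheses of Theorem~\ref{mthm:boundary_Anghel} hold:
\begin{itemize}
\item $M$ is complete, noncompact and spin;
\item $\partial M$ is compact and mean-convex;
\item $M$ is partitioned by $N$ with nonzero $\widehat{A}$-genus.
\end{itemize}

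It remains to choose the compact set $K$. Since $N$ is closed, we may take any compact subset $K\supset N$, for instance $K=N$ itself or a closed tubular neighbourhood of $N$. Because $\scalM>0$ on all of $M$, it is in particular positive on $K$. Theorem~\ref{mthm:boundary_Anghel} then yields $\inf_M \scalM<0$, which contradicts $\scalM>0$ everywhere. Hence no such metric exists.

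There is no real obstacle here. The only points to check are that the notion of ``mean-convex'' and the spin hypothesis match those used in Theorem~\ref{mthm:boundary_Anghel}. The statement of the corollary omits the word ``spin''; I read it as implicitly assumed, since the $\widehat{A}$-genus obstruction requires it, exactly as in the theorem. Note also that positivity of $\scalM$ alone, even without a uniform positive lower bound, already contradicts $\inf_M\scalM<0$. So the argument needs no additional hypothesis.
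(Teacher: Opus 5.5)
Your argument is correct and is exactly the paper's: the corollary follows immediately from Theorem~\ref{mthm:boundary_Anghel}. Take a compact $K\supset N$ (e.g.\ a thin closed tubular neighbourhood of $N$ away from $\partial M$); then $\scalM>0$ on $K$, so $\inf_M\scalM<0$, contradicting positivity. (One minor point: the corollary does explicitly assume $M$ is spin, so that hypothesis is not merely implicit.)
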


This extends Anghel's result~\cite{An93}*{Theorem~2.1} to manifolds with boundary.

\begin{corollary}\label{cor:noncompact_band_obstruction}
Let \(M\) be an \(m\)-dimensional noncompact spin manifold with compact boundary. Suppose that some component of \(\partial M\) has nonzero \(\widehat{A}\)-genus. Then \(M\) admits no complete metric with positive scalar curvature and mean-convex boundary. 
\end{corollary}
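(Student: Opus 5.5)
We deduce Corollary~\ref{cor:noncompact_band_obstruction} from Corollary~\ref{cor:boundary_vertical} by exhibiting a partitioning hypersurface with nonzero $\widehat{A}$-genus. Let $\Sigma$ be a component of $\partial M$ with $\widehat{A}(\Sigma)\neq 0$. By the collar neighbourhood theorem there is an embedding $\Sigma\times[0,2\delta)\hookrightarrow M$ with $\Sigma\times\{0\}=\Sigma$. We take $N:=\Sigma\times\{\delta\}$. This is a closed hypersurface in the interior of $M$, diffeomorphic to $\Sigma$, and its induced spin structure is the restriction of the spin structure of $M$. Since the $\widehat{A}$-genus is a spin-cobordism invariant and $N$ is spin-diffeomorphic to $\Sigma$ (after orientation reversal if needed, which only changes the sign), we get $\widehat{A}(N)=\pm\widehat{A}(\Sigma)\neq 0$.

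Next we check that $N$ partitions $M$ in the sense of the paper. Set $M_-:=\Sigma\times[0,\delta]$, which is compact with boundary $\Sigma\sqcup N$, and set $M_+:=M\setminus(\Sigma\times[0,\delta))$. Then $M=M_+\cup M_-$ and $N=M_+\cap M_-$. The piece $M_+$ contains the remaining compact boundary components and all of the noncompact part of $M$, so it is noncompact.

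The one point requiring care is that the partition framework allows $M_-$ to be compact and to carry boundary. The theorems above are stated for manifolds with compact boundary and closed partitioning hypersurfaces, so this configuration is covered. In particular, Theorem~\ref{mthm:boundary_Anghel} only needs $M$ noncompact, not that both sides be noncompact. If the paper's notion of partition instead required $N$ to separate two ends, we would argue differently: we would apply Theorem~\ref{mthm:boundary_Anghel}, or its proof via the deformed Dirac (Callias) operator, directly with the boundary component $\Sigma$ itself playing the role of the hypersurface. In that argument the mean-convexity boundary condition supplies the needed sign on the boundary term.

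Now suppose, for contradiction, that $g_M$ is complete with $\scalM>0$ and mean-convex $\partial M$. Then $M$ is a noncompact spin manifold with compact boundary, partitioned by the closed hypersurface $N$ with $\widehat{A}(N)\neq 0$, and it carries a complete metric of positive scalar curvature with mean-convex boundary. This contradicts Corollary~\ref{cor:boundary_vertical}.

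For a fully explicit contradiction through Theorem~\ref{mthm:boundary_Anghel}, take the compact set $K:=\Sigma\times[0,2\delta/\!\!\;]$, or more simply $K:=\Sigma\times[0,3\delta/2]$. It contains $N$, and $\scalM>0$ on $K$. The theorem then gives $\inf_M\scalM<0$, which contradicts $\scalM>0$ everywhere.
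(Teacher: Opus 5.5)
Your proposal is correct and follows the paper's proof: you use a collar to push the boundary component $\Sigma$ off to an interior hypersurface $N\cong\Sigma$ with $\widehat{A}(N)=\pm\widehat{A}(\Sigma)\neq 0$ that partitions $M$ (allowing the compact piece $\Sigma\times[0,\delta]$, as the paper's own argument implicitly does), and then invoke Corollary~\ref{cor:boundary_vertical}. The hedging paragraph is unnecessary. In your optional final step via Theorem~\ref{mthm:boundary_Anghel}, choose $K$ in the interior, e.g.\ $\Sigma\times[\delta/2,3\delta/2]$, because that theorem's proof needs $\psi=0$ on $K$ and $\psi=\pm1$ near $\partial M$.
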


The next result is a rigidity theorem for manifolds with nonnegative
scalar curvature and mean-convex boundary.

\begin{theorem}\label{thm:scalar_mean_rigidity_all_dimensions}
Let \((M,g_M)\) be an \(m\)-dimensional complete noncompact Riemannian spin manifold with nonnegative scalar curvature and compact mean-convex boundary \(\partial M\). If \(M\) can be partitioned by a hypersurface with nonzero \(\widehat{A}\)-genus, then \(\partial M\) must be connected and \(M\) is isometric to a product \( (\partial M \times [0,\infty), g_{\partial M} + dt^2) \), where \((\partial M, g_{\partial M})\) is Ricci-flat.
\end{theorem}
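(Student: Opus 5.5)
We follow the route used for Theorem~\ref{thm:splitting_theorem}, adapted to the boundary, with Theorem~\ref{mthm:boundary_Anghel} as the engine. The first step shows that \(\scalM\equiv 0\) and that the mean curvature \(H\) of \(\partial M\) vanishes identically. Suppose instead that \(\scalM(p)>0\) at some point \(p\), or that \(H>0\) somewhere on \(\partial M\). In the second case, a conformal change \(u^{4/(m-2)}g_M\) (a Kazdan--Warner type perturbation with the appropriate Neumann data) trades the positive mean curvature for positive scalar curvature near the boundary, keeping \(\scalM\ge 0\) and \(H\ge 0\). So we may assume \(\scalM>0\) near some point. We then deform the metric, again conformally with \(u\to 1\) at infinity and \(u\) close to \(1\) in \(C^2\), to obtain a complete metric with \(\scalM\ge 0\) everywhere, \(\scalM>0\) on a compact set \(K\supset N\), and \(\partial M\) still mean-convex. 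To build \(u\), we solve \(-\tfrac{4(m-1)}{m-2}\Delta u+\scalM u=\varepsilon f\,u\) with \(f\) a bump, together with a Robin condition on \(\partial M\) keeping \(H\ge0\). The conformal factor has to spread the positivity from near \(p\) to a neighbourhood of the compact set containing \(N\); for this we expect to use that \(M\) is connected and that the first eigenfunction of a perturbed operator is positive. This contradicts Theorem~\ref{mthm:boundary_Anghel}, which would force \(\inf_M\scalM<0\).

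When \(m=2\) the hypersurface \(N\) is \(1\)-dimensional and has \(\widehat{A}(N)=0\), so we may take \(m\ge3\).

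The second step produces a parallel spinor. With \(\scalM\equiv0\) and \(H\equiv0\), we consider the Callias-type operator \(B=D+f\sigma\) used in the proof of Theorem~\ref{mthm:boundary_Anghel}, with the local boundary (chirality) condition and a potential \(f\) that changes sign across \(N\). Its index equals \(\widehat{A}(N)\neq0\) by the partitioned-manifold index theorem with boundary. Letting the Lipschitz constant of \(f\) tend to \(0\), the Schr\"odinger--Lichnerowicz identity with boundary term \(\tfrac12\int_{\partial M}H|\psi|^2\) yields a sequence of kernel elements. We normalise them on a fixed compact set and pass to the limit, obtaining a nonzero spinor \(\psi\) with \(\nabla\psi=0\) that satisfies the boundary condition. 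We expect this to be the main obstacle: we need uniform local control to rule out that mass escapes to infinity, and we would handle it as in the splitting theorem, via \(L^2\) bounds on \(|\nabla\psi|\) that go to zero relative to the normalisation. Once \(\psi\) is parallel, \(M\) is Ricci-flat. Moreover, the boundary condition combined with \(\nabla\psi=0\) forces the second fundamental form of \(\partial M\) to vanish, since the boundary Dirac identity gives \(A\cdot\psi\)-terms that must be zero. So \(\partial M\) is totally geodesic.

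The third step is the splitting. We take a component \(\Sigma\) of \(\partial M\) and the distance function \(t=d(\cdot,\Sigma)\). Since \(\partial M\) is compact and \(M\) is noncompact, there is a ray from \(\Sigma\). Ricci-flatness together with the totally geodesic boundary lets us run the Busemann/Laplacian comparison argument, or equivalently use the parallel spinor to produce a parallel vector field \(\langle e_i\cdot\psi,\omega\cdot\psi\rangle\)-type normal. From this, \(\nabla t\) is parallel, and the normal exponential map \(\Sigma\times[0,\infty)\to M\) is an isometry onto its image. Because the image is open and closed and \(M\) is connected, the map is onto. Hence \(\partial M=\Sigma\) is connected and \(M\cong(\partial M\times[0,\infty),g_{\partial M}+dt^2)\), with \(g_{\partial M}\) Ricci-flat because \(M\) is.
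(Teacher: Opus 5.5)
Your route differs from the paper's and works once Step~1 is removed (see the last paragraph).

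\textbf{Comparison with the paper.} The paper obtains $\Ric_{g_M}\equiv 0$ indirectly. If $\Ric_{g_M}\not\equiv 0$, the Cruz--Santos deformation (Kazdan-type, non-conformal) gives a complete metric with $\scalM>0$ and mean-convex boundary, contradicting Corollary~\ref{cor:boundary_vertical}. Connectedness of $\partial M$ and the splitting are then quoted from Ichida and Kasue (Croke--Kleiner); these results need only $\Ric\ge 0$ and $H\ge 0$. You instead get Ricci-flatness directly from the index, by producing a parallel spinor through a compactness argument. This bypasses both the deformation theorem and Theorem~\ref{mthm:boundary_Anghel}. Your route is more self-contained and yields more (a parallel spinor and a totally geodesic boundary). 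The paper's route is much shorter, given the cited deformation and splitting theorems.

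\textbf{Step 2 goes through.} The obstacle you anticipate does not arise if you take $f=\varepsilon\psi_0$ with $\psi_0$ fixed.
\begin{itemize}
\item The cross term is then bounded by $\varepsilon\sup|\D\psi_0|\int_V|u_\varepsilon|^2$, where $V=\supp(\D\psi_0)$ is fixed.
\item Your identity omits the boundary term $\int_{\partial M}sf|u_\varepsilon|^2$. It is $O(\varepsilon)$ on the compact set $\partial M$, so whatever its sign, a trace inequality on a compact collar $W$ of $\partial M$ absorbs it.
\item Normalise $\int_{V\cup W}|u_\varepsilon|^2=1$. Then $\|\nabla u_\varepsilon\|_{L^2(M)}^2\le C\varepsilon$, using only $\scalM\ge 0$ and $H\ge 0$.
\item Kato's inequality, a local Poincar\'e inequality and Rellich's theorem give a nonzero parallel limit $\phi$. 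Since traces converge, $\phi$ satisfies $c(\nu^*)\sigma\phi=s\phi$ on $\partial M$.
\end{itemize}

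\textbf{Step 3 can be made concrete.} Define $\omega(Y):=\langle c(Y^*)\sigma\phi,\phi\rangle$.
\begin{itemize}
\item $\omega$ is real, because $c(Y^*)\sigma$ is self-adjoint. It is parallel, because $\phi$, $\sigma$ and Clifford multiplication are parallel.
\item The boundary condition gives $\omega=s|\phi|^2\nu^*$ along $\partial M$. Hence $\omega^\sharp/|\phi|^2$ is a unit parallel vector field normal to $\partial M$.
\item Its flow gives the product structure. A flow line running from one boundary component to another would force $M$ to be compact, so $\partial M$ is connected.
\end{itemize}
Alternatively, once $\Ric\equiv 0$ is known, simply cite Kasue.

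\textbf{Drop Step~1.} It is unnecessary: $\frac14\int_M\scalM|u|^2$ and the mean-curvature boundary integral enter the Step~2 identity with nonnegative sign under the original hypotheses. So Step~2 needs neither $\scalM\equiv 0$ nor $H\equiv 0$; both follow afterwards. As written, Step~1 is also false. Suppose $u$ solves $-\frac{4(m-1)}{m-2}\Delta u+\scalM u=\varepsilon f u$ with $\|u-1\|_{C^2}\le\delta$. Then $\scalM\le C(\varepsilon+\delta)$ everywhere. So once $\scalM(p)>0$ is fixed, no conformal factor $C^2$-close to $1$ with small $\varepsilon$ exists.
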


This theorem is a partial converse to the obstruction results: if the
manifold can be partitioned by a hypersurface with nonzero
$\widehat{A}$-genus, then the only way to have nonnegative scalar
curvature is to be a rigid product with a Ricci-flat boundary. The proof
relies on Corollary~\ref{cor:boundary_vertical}.

As an application, we obtain an obstruction for the connected sum of an
$\widehat{A}$-band with any noncompact spin manifold.

\begin{corollary}
Let $M$ be an $m$-dimensional noncompact spin manifold, possibly with
compact boundary, and let $N$ be an $(m-1)$-dimensional closed spin
manifold with nonzero $\widehat{A}$-genus. Then the connected sum
$(N \times [0,1])\# M$\footnote{For manifolds with boundary, the
connected sum is taken in the interior.} cannot admit a complete metric
with nonnegative scalar curvature and mean-convex boundary.
\end{corollary}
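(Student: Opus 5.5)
The plan is to reduce to Theorem~\ref{thm:scalar_mean_rigidity_all_dimensions}. Suppose, for contradiction, that $X := (N\times[0,1])\# M$ carries a complete metric $g$ with $\operatorname{scal}_g\ge 0$ and mean-convex boundary. The first task is to record the topology of $X$. It is a noncompact spin manifold, being a connected sum of spin manifolds. Its boundary is compact and equals
\[
N\times\{0\}\;\sqcup\; N\times\{1\}\;\sqcup\;\partial M,
\]
and it has at least two boundary components, because $N\times\{0\}$ and $N\times\{1\}$ survive as distinct components. The connected sum is performed in the interior of $N\times(0,1)$, away from the slice $N\times\{1/2\}$.

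The second step produces a partitioning hypersurface with nonzero $\widehat{A}$-genus. I will take $\Sigma = N\times\{1/2\}$, or better a slice $N\times\{\varepsilon\}$ near the boundary component $N\times\{0\}$, chosen so that the connected-sum ball lies on the other side. Then $\Sigma$ is a closed hypersurface that separates $X$ into two pieces:
\begin{itemize}
\item a compact collar $N\times[0,\varepsilon]$;
\item the remaining noncompact part, which contains $M$ and $N\times\{1\}$.
\end{itemize}
This is exactly a partition in the sense used earlier, and $\widehat{A}(\Sigma)=\widehat{A}(N)\neq 0$.

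The third step applies Theorem~\ref{thm:scalar_mean_rigidity_all_dimensions} to $(X,g)$. Its hypotheses are all met: $X$ is complete, noncompact and spin, it has nonnegative scalar curvature, and its boundary is compact and mean-convex. The theorem then says that $\partial X$ must be connected, and that $X$ is isometric to $\partial X\times[0,\infty)$. But $\partial X$ has at least two components, which is a contradiction. Alternatively, the final conclusion could be obtained directly from Corollary~\ref{cor:noncompact_band_obstruction}, since a component of $\partial X$ has nonzero $\widehat{A}$-genus; however, that corollary only excludes strictly positive scalar curvature. For $\operatorname{scal}\ge0$ the rigidity theorem is the right tool.

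The main obstacle is making sure the partition hypotheses in Theorem~\ref{thm:scalar_mean_rigidity_all_dimensions} really are met, in the same sense as in Theorem~\ref{mthm:boundary_Anghel}. In particular, the partitioning hypersurface has to lie in the interior and must not meet the boundary; choosing the slice $N\times\{\varepsilon\}$ disjoint from both the boundary and the connected-sum region handles this. A second point to check is that noncompactness comes from $M$. If $M$ has compact boundary, its ends persist in $X$, so $X$ is noncompact with compact boundary, as required.
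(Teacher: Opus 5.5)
Your proposal is correct and follows the argument the paper intends, since it states this corollary as a direct application of Theorem~\ref{thm:scalar_mean_rigidity_all_dimensions}: apply that theorem to $(N\times[0,1])\# M$, partitioned by a slice $N\times\{t\}$ chosen away from the connected-sum ball, whose boundary $N\times\{0\}\sqcup N\times\{1\}\sqcup\partial M$ is disconnected, contradicting the connectedness of the boundary that the theorem forces. It does not matter whether you take $N\times\{1/2\}$ or a slice cutting off a compact collar, because the paper's notion of partition does not require both sides to be noncompact. You are also right that Corollary~\ref{cor:noncompact_band_obstruction} alone would only exclude strictly positive scalar curvature.
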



\subsection{Calabi-Yau type theorem in higher dimensions}

Recall that a complete Riemannian manifold $(M,g_M)$ has \emph{sublinear volume growth} if
\[
    \liminf_{r\to +\infty}
    \frac{\operatorname{vol}_{g_M}(B_r(p))}{r}=0
\]
for some point $p\in M$. This condition is fundamental because,
by the classical results of Calabi~\cite{Calabi75} and Yau~\cite{Yau76}, a
complete manifold with nonnegative Ricci curvature has sublinear volume
growth if and only if it is compact.

Recently, Zhu~\cite{Zhu24+} and Bi-Zhu~\cite{BZ26+} established a scalar
curvature analogue in low dimensions: for complete spin manifolds of
dimension $2\le m\le 5$ with nonnegative scalar curvature outside a compact
set that are aspherical at infinity, sublinear volume growth forces
compactness. In higher dimensions, however, no comparable
Calabi-Yau type theorem under a nonnegative scalar curvature hypothesis was
previously known.

In this subsection we prove such a theorem in higher dimensions, replacing
asphericity at infinity by a different topological condition involving the
$\widehat{A}$-genus.

\begin{definition}
A complete manifold \(M\) is said to be \emph{partitioned at infinity by a closed hypersurface with nonzero \(\widehat{A}\)-genus} if for every compact subset \(K\subset M\) there exists a compact subset \(L\supset K\) such that \(M\setminus L\) contains a separating closed hypersurface \(\Sigma\) with \(\widehat{A}(\Sigma)\neq 0\).
\end{definition}

The following lemma, due to Bi-Zhu~\cite{BZ26+}, provides a mean-concave
exhaustion for complete manifolds with sublinear volume growth; it was
proved earlier in dimensions up to seven by Lott~\cite{Lott25} and
Zhu~\cite{Zhu24+}.

\begin{lemma}[\cite{BZ26+}*{Theorem~1.2}]\label{lem:zhu}
Let \((M,g_M)\) be an \(m\)-dimensional complete Riemannian manifold with sublinear volume growth.
Then for any bounded region \(K\subset M\) there exists a bounded region \(\Omega\supset K\) such that
\(\partial\Omega\) is mean-concave with respect to the outer unit normal of \(\Omega\).
\end{lemma}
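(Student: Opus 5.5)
Since the lemma is quoted from \cite{BZ26+}, I only outline how I would prove it with a variational (outward-minimizing hull) argument. Take the outward unit normal $\nu$ and the convention that $\partial\Omega$ is mean-concave when $H_{\partial\Omega}\le 0$. This means that pushing $\partial\Omega$ outward does not increase area to first order.

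\textbf{Step 1: small spheres far out.} Fix $p\in M$ and $r_0$ with $K\subset B_{r_0}(p)$. Because $\liminf_{r\to\infty}\operatorname{vol}(B_r(p))/r=0$, the coarea formula applied to the distance function gives, for every $\varepsilon>0$, arbitrarily large radii $R$ with
\[
\mathcal{H}^{m-1}(\partial B_R(p))\le \frac{2}{R}\operatorname{vol}(B_{2R}(p))\cdot\frac{1}{1}<\varepsilon .
\]
After a small perturbation, $\partial B_R$ can be replaced by a smooth hypersurface of area less than $\varepsilon$.

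\textbf{Step 2: a constrained minimization.} Let $A_0>0$ be a lower bound for the perimeter of every set of finite perimeter that contains $B_{r_0+1}(p)$. Choose $\varepsilon\ll A_0$ and $R$ as in Step~1, and set $D=B_{R'}(p)$ for a slightly larger $R'$. I would then minimize a weighted functional
\[
\mathcal{F}(\Omega)=P(\Omega)-\int_\Omega h,
\]
over sets of finite perimeter with $B_{r_0}(p)\subset\Omega\subset D$. Here $h\ge 0$ is a smooth function, positive in the annulus between $r_0$ and $R$, and chosen to blow up (or become very negative) so that the two obstacle constraints are never active at a minimizer. The purpose of the small-area sphere is to make competitors that stay near $K$ more expensive than one that goes out to radius about $R$. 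The weight $h$ is chosen so that, on the free part of the boundary, the first variation gives $H_{\partial\Omega}=-h\le 0$ with respect to $\nu$. Existence of a minimizer follows from compactness in $BV$ inside the compact region $\overline D$.

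\textbf{Step 3: obstacles and regularity.} I expect the main obstacle to be twofold. First, one must show that the minimizer touches neither the inner obstacle nor the outer one. At a touching point only one-sided variations are allowed, so one must check that the one-sided first-variation inequality has the favourable sign. Where it does not, a comparison argument must exclude contact, using the tiny area from Step~1 against the bound $A_0$. Second, in dimensions $m\ge 8$ the minimizer may have a singular set of codimension at least $7$.

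To handle the singular set I would first try a perturbation of the metric or of $h$, in the spirit of Smale/Chodosh--Mantoulidis--Schoen generic regularity. The alternative is to keep a weak (viscosity) notion of mean-concavity and then smooth the boundary by slightly enlarging $\Omega$ along an outward mean-curvature-decreasing flow. Either route yields a smooth bounded region $\Omega\supset K$ whose boundary is mean-concave with respect to its outer normal.
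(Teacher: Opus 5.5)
The paper gives no proof of this lemma. It quotes Bi--Zhu precisely because the earlier proofs of Lott and Zhu covered only $m\le 7$, so what is being imported is the all-dimensional statement. Your sketch has two genuine gaps.

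First, Step~2 has the wrong sign, and its handling of the obstacles does not work as written. With your convention, an outward variation $\phi\nu$ changes $\mathcal{F}(\Omega)=P(\Omega)-\int_\Omega h$ by $\int_{\partial\Omega}(H-h)\phi\,dA$. A free critical boundary therefore has $H=h\ge 0$, which is mean-convex, not mean-concave. More structurally, a blow-up barrier is compatible with the sign you want only at the outer obstacle. Anything that keeps $\partial\Omega$ off the inner obstacle must reward enlarging $\Omega$ near $K$, and so forces $H>0$ wherever it acts.

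What you want instead is $h\le 0$, for instance $h\equiv-\delta$ on the small-area region and $h\to-\infty$ just outside it. This gives $H=h<0$ and keeps the minimizer off the outer obstacle, but contact with the inner obstacle must then be excluded by a comparison argument. Your constant $A_0$ cannot supply that. As defined it equals $0$, because the balls from Step~1 contain $B_{r_0+1}(p)$ and have perimeter $<\varepsilon$.

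A mechanism that does work:
\begin{enumerate}
\item Comparison with the small-area region gives $P(\Omega)\le\varepsilon+\delta\operatorname{vol}(D)$.
\item Density estimates for the free part of $\partial\Omega$ then show that $\partial\Omega$ misses a fixed collar around $K$ once this bound is small.
\item A relative isoperimetric constant of that collar, which depends only on the geometry near $K$, forces every component of the collar to lie in $\Omega$.
\item Whatever remains of $\Omega^c$ inside the collar can then be filled in at a gain.
\end{enumerate}

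Second, and more seriously, even after this correction your argument is essentially the low-dimensional one. Step~3, which is the actual content of the cited theorem, is left open. Generic regularity is currently known only in a few dimensions just above $7$, not for all $m$. It is also obtained by perturbing the metric, so the smooth boundary it produces is mean-concave for the perturbed metric. The second fundamental forms of the perturbed minimizers blow up near the original singular set, so the sign of $H$ with respect to $g_M$ does not follow from the perturbation being small.

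Your alternative, keeping a weak notion of mean-concavity and smoothing by an outward flow, is exactly the missing theorem. You would have to show that a bounded region whose boundary is mean-concave away from a closed singular set of codimension at least $7$ can be enlarged to a bounded region with smooth mean-concave boundary. Short-time smoothing of a flow started from such a singular boundary is not something you can simply quote in this generality. As it stands, your proposal proves the lemma at best for $m\le 7$, whereas Theorem~\ref{mthm:higher_calabi-yau} uses it in every dimension.
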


\begin{mthm}\label{mthm:higher_calabi-yau}
Let \((M,g_M)\) be an \(m\)-dimensional complete spin manifold with nonnegative scalar curvature outside a compact subset.  Suppose that \(M\) is partitioned at infinity by a closed hypersurface with nonzero \(\widehat{A}\)-genus.  Then \(M\) has sublinear volume growth if and only if it is compact.
\end{mthm}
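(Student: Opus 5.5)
The easy direction is immediate: a compact manifold has bounded volume, so $\operatorname{vol}_{g_M}(B_r(p))/r\to 0$. For the converse I would argue by contradiction. Suppose $M$ is noncompact with sublinear volume growth. The strategy is to cut out a compact region with mean-concave boundary, keep the noncompact complement, and then apply the boundary obstruction, Corollary~\ref{cor:boundary_vertical} (or Theorem~\ref{thm:scalar_mean_rigidity_all_dimensions} when only nonnegativity of $\scalM$ is available).

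\textbf{Step 1: choose the region $\Omega$.} Let $K_0$ be a compact set outside of which $\scalM\ge 0$. By Lemma~\ref{lem:zhu} there is a bounded region $\Omega\supset K_0$ whose boundary $\partial\Omega$ is mean-concave with respect to the outer unit normal of $\Omega$. Set $M' := M\setminus\Omega$. This is a complete noncompact manifold with compact boundary $\partial\Omega$, and it satisfies $\scalM\ge 0$. The outer normal of $\Omega$ is the inner normal of $M'$, so mean-concavity of $\partial\Omega$ for $\Omega$ means exactly that $\partial M'$ is mean-convex in the convention of Corollary~\ref{cor:boundary_vertical}. I would check this sign convention carefully.

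\textbf{Step 2: find a partitioning hypersurface in $M'$.} Because $M$ is partitioned at infinity, applying the definition with $K=\overline{\Omega}$ gives a compact $L\supset\overline\Omega$ and a separating closed hypersurface $\Sigma\subset M\setminus L$ with $\widehat{A}(\Sigma)\neq 0$. I want $\Sigma$ to partition $M'$ in the required sense, i.e.\ into two pieces of which at least one is noncompact, so that the index-theoretic argument has a side without boundary.

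I expect this to be the main obstacle. A separating hypersurface far out may bound a compact piece lying entirely within one end, in which case it is null-bordant there and $\widehat{A}(\Sigma)=0$. Since $\widehat{A}(\Sigma)\neq0$, $\Sigma$ cannot bound a compact spin manifold. Hence neither side of $\Sigma$ in $M$ is compact. The side not containing $\Omega$ stays noncompact in $M'$, so $\Sigma$ partitions $M'$ with one side containing $\partial M'$ and the other side noncompact. This is exactly the setting of Corollary~\ref{cor:boundary_vertical}.

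\textbf{Step 3: apply the boundary results.} Corollary~\ref{cor:boundary_vertical} requires strict positivity of the scalar curvature, but here we only have $\scalM\ge0$. So I would invoke Theorem~\ref{thm:scalar_mean_rigidity_all_dimensions} instead. It forces $M'$ to be isometric to $\partial\Omega\times[0,\infty)$ with the product metric. Then the volume of $M'\cap B_r(p)$ grows at least like $c\,r\,\operatorname{vol}(\partial\Omega)$ with $c>0$, which contradicts sublinear volume growth. Therefore $M$ is compact.

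A subtlety to handle here is a possibly disconnected $\partial\Omega$. Theorem~\ref{thm:scalar_mean_rigidity_all_dimensions} asserts that $\partial M'$ is connected, which is fine. Alternatively, I would restrict to the connected component of $M'$ containing the noncompact side of $\Sigma$; its boundary is a union of components of $\partial\Omega$, and each of these is still mean-convex.
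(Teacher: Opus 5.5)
Your proposal is correct and follows the paper's proof essentially step for step: you take the Bi--Zhu mean-concave region $\Omega\supset K_0$, find a partitioning hypersurface with nonzero $\widehat{A}$-genus outside $\overline{\Omega}$, apply Theorem~\ref{thm:scalar_mean_rigidity_all_dimensions} to $M\setminus\Omega$, and contradict sublinear growth via the linear volume growth of the resulting product. Your extra justifications, namely bordism invariance of $\widehat{A}$ forcing the sides of $\Sigma$ to be noncompact and passing to the relevant connected component of $M\setminus\Omega$, fill in details that the paper leaves implicit.
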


\vspace{.3mm}

\textbf{Notation.}
Unless otherwise specified, all manifolds are assumed to be smooth, connected, oriented and of dimension at least two.
Additionally, the notion of completeness refers to metric completeness when \(M\) has nonempty boundary.

\textbf{Organization of the paper.}
In Section~\ref{sec:Callias_operator_spectral_flow}, we provide the necessary technical tools concerning Callias operators and spectral flow. 
In Section~\ref{sec:codimension_one}, we prove Theorem~\ref{mthm:codimension_one}.
Section~\ref{sec:spectral_Anghel} is devoted to the proof of Theorem~\ref{mthm:quantitative_Anghel},
and Section~\ref{sec:boundary_Anghel_Calabi_Yau} contains the proofs of Theorem~\ref{mthm:boundary_Anghel}
and Theorem~\ref{mthm:higher_calabi-yau}.


\section{Preliminaries}\label{sec:Callias_operator_spectral_flow}

This section reviews the fundamental properties of deformed Dirac operators (also called Callias operators).
Let $(M,g_M)$ be an $m$-dimensional complete Riemannian spin manifold, possibly noncompact and with compact boundary.

\subsection{Callias operators and spectral flow}\label{subsec:Callias}

Following \cite{CZ24}*{Example~2.6}, let $\slashed{S}_M$ be the complex spinor bundle on $M$ with the connection induced by the Levi-Civita connection.
Let $\mathcal{E}$ be a Hermitian vector bundle on $M$ with a metric connection. Then the $\mathbf{Z}_2$-graded bundle 
\[
  S:=S^+\oplus S^-:=(\slashed{S}_M \,\otimes\, \mathcal{E}) \oplus (\slashed{S}_M \,\otimes\, \mathcal{E})
\]
is a \textit{relative Dirac bundle} in the sense of \cite{CZ24}*{Definition 2.2} with Clifford multiplication
\[
c:=\begin{pmatrix}
    0 & c_{\slashed{S}} \otimes {\rm id}_{\mathcal{E}} \\
    c_{\slashed{S}} \otimes {\rm id}_{\mathcal{E}} & 0
\end{pmatrix}
\]
and an odd, self‑adjoint, parallel bundle involution
\[
\sigma:=
\begin{pmatrix}
0 & -\sqrt{-1} \\
\sqrt{-1} & 0
\end{pmatrix}
\]
globally defined on $M$.
The Dirac operator $\mathcal{D}$ on $S$ is given by
\[
\mathcal{D}:=\begin{pmatrix}
    0 & \slashed{D}_{\mathcal{E}} \\
   \slashed{D}_{\mathcal{E}}  & 0
\end{pmatrix},
\]
where $\slashed{D}_{\mathcal{E}}$ is the spinor Dirac operator on $M$ twisted with the bundle $\mathcal{E}$.

A Lipschitz function $f: M\to \mathbf{R}$ is called an \textit{admissible potential} if $f$ is equal to a nonzero constant on each component outside a compact subset of $M$ (cf.~\cite{CZ24}*{Definition 3.1}).

\begin{definition}
The \textit{Callias operator} on $S$ associated to the above data is given by
\[
\mathcal{B}_{f} := \mathcal{D} + f \sigma.
\]  
\end{definition}

For the analysis of Callias operators on a manifold $M$ with compact boundary, one must impose appropriate local boundary conditions. The relevant notion is that of a boundary chirality.
\begin{definition}[\cite{CZ24}]
Let $S$ be a relative Dirac bundle and let $s\colon \partial M\to \{\pm 1\}$ be a locally constant function. The \textit{boundary chirality} on $S$ associated to the choice of signs $s$ is the endomorphism
\[
     \chi := s\, c(\nu^*)\sigma \colon \left. S\right|_{\partial M} \to \left. S\right|_{\partial M},
\]
where $\nu^*$ is the dual covector of the outward unit normal $\nu$ to $\partial M$.
\end{definition}

The map $\chi$ is a self‑adjoint, even involution; it anti‑commutes with \(c(\nu^*)\) and commutes with $c(w^*)$ for all $w\in T(\partial M)$.
This leads to the following boundary condition.
\begin{definition}[\cite{CZ24}]
A section $u\in C^{\infty}(M,S)$ satisfies the \textit{local boundary condition} if
\[
  \chi \left(u|_{\partial M}\right) = u|_{\partial M}.
\]
\end{definition}

For a choice of $s:\partial M\to \{\pm 1\}$, denote by \(\mathcal{B}_{f,s}\) the restriction of \(\mathcal{B}_{f}\) to the domain
\[
   {\rm dom}(\mathcal{B}_{f,s}):=\{ u\in C_c^{\infty}(M,S)\colon \chi(u|_{\partial M})=u|_{\partial M} \}.
\]

Note that, by definition, $S=S^+\oplus S^-$ is $\mathbf{Z}_2$-graded and 
$\mathcal{B}_f$ can be decomposed as $\mathcal{B}_f=\mathcal{B}_f^+ \oplus \mathcal{B}_f^-$ where $\mathcal{B}_f^{\pm}$ are differential operators $C^{\infty}(M,S^{\pm}) \to C^{\infty}(M, S^{\mp})$. 
Because \(\chi\) is even with respect to the grading, \(\mathcal{B}_{f,s}\) splits as \(\mathcal{B}_{f,s}=\mathcal{B}_{f,s}^{+}\oplus\mathcal{B}_{f,s}^{-}\) with
\[
   \mathcal{B}_{f,s}^{\pm}: \{u\in C_c^{\infty}(M,S^{\pm}) \colon \chi(u|_{\partial M}) = u|_{\partial M}\} \to L^2(M,S^{\mp}).
\]
By \cite{CZ24}*{Theorem~3.4}, the operator $\mathcal{B}_{f,s}$ is self-adjoint and Fredholm.
Its \textit{index} is defined as
\[
   \ind(\mathcal{B}_{f,s}):=\dim (\ker(\mathcal{B}_{f,s}^+)) - \dim (\ker(\mathcal{B}_{f,s}^-)).
\]

Let $\rho_M\in C^\infty(M,U(l))$ be a smooth map on $M$ with values in the unitary group $U(l)$ such that the commutator \([\mathcal{D},\rho_M]\) defines a bounded operator on \(\operatorname{dom}(\mathcal{B}_{f,s})\).  

Following \cite{Ge93}*{Section~1, p.~491}, consider the trivial bundle \(\mathcal{E}_0 := M \times \mathbf{C}^l\) of rank $l$ over $M$ with a trivial connection $\D$.
The map \(\rho_M\) determines a family of Hermitian connections
\[
    \nabla^{\mathcal{E}_0}(t):=\D + t \rho_M^{-1}[\D, \rho_M], \quad t\in [0,1],
\]
with curvature
\[
    R^{\nabla^{\mathcal{E}_0}(t)}= -t(1-t)(\rho_M^{-1}(\D\rho_M))^2.
\]

Recall that the curvature $R^{\rho_M}$ is defined in \eqref{eq:defn_rho_curvature}.
Note that
\[
    \| R^{\nabla^{\mathcal{E}_0}(t)} \|_{\infty} \leq \|R^{\rho_M}\|_{\infty}
\]
for each $t\in [0,1]$.

Using this family of connections $\{\nabla^{\mathcal{E}_0}(t)\}_{t\in [0,1]}$ we construct a corresponding family of Callias operators. On the twisted Dirac bundle \(S\otimes\mathbf{C}^{l}\) set
\[
\nabla(t) := \nabla^{S}\otimes\operatorname{id} + \operatorname{id}\otimes\nabla^{\mathcal{E}_0}(t), \quad t\in[0,1],
\]
and let \(\mathcal{D}(t)\) be the Dirac operator associated with \(\nabla(t)\); explicitly,
\[
\mathcal{D}(t) = \mathcal{D} + t\,\rho_M^{-1}[\mathcal{D},\rho_M] =(1-t)\mathcal{D}+ t\rho_M^{-1} \mathcal{D} \rho_M.
\]

The involution \(\sigma\) extends naturally to \(S\otimes\mathbf{C}^{l}\); we denote the extension again by \(\sigma\). For notational simplicity we identify \(S\otimes\mathbf{C}^{l}\) with \(S\) when no confusion arises.

Observe that \(\rho_M\) preserves \(\operatorname{dom}(\mathcal{B}_{f,s})\) and commutes with both \(f\) and \(\sigma\). For $t\in[0,1]$, define a family of Callias operators on \(\operatorname{dom}(\mathcal{B}_{f,s})\) by
\[
\mathcal{B}_{f}(t) := (1-t)\mathcal{B}_{f} + t\,\rho_M^{-1}\mathcal{B}_{f}\rho_M
= \mathcal{D}(t) + f\sigma.
\]
Then \(\{\mathcal{B}_{f,s}(t)\}_{t\in[0,1]}\) forms a continuous family of self‑adjoint Fredholm operators in the Riesz topology (cf.~\cite{BMJ05}, \cite{Lesch05}).

\begin{definition}[{\cite{Shi24+}}]
The \textit{spectral flow} of the family \(\{\mathcal{B}_{f,s}(t)\}_{t\in[0,1]}\), denoted by $\sflow(\mathcal{B}_{f,s},\rho_M)$, is defined to be the net number of eigenvalues of $\mathcal{B}_{f,s}(t)$ that change from negative to nonnegative as $t$ increases from $0$ to $1$.
\end{definition}

\begin{remark}
    If every single operator $\mathcal{B}_{f,s}(t)$ in such a family is invertible, there cannot be any eigenvalue changing its sign when $t$ varies from $0$ to $1$, and therefore, the spectral flow $\sflow(\mathcal{B}_{f,s},\rho_M)$ is forced to vanish.
\end{remark}

\subsection{Spectral estimates}
To make the paper self-contained, we reproduce the spectral estimate for a family of Callias operators from \cite{Liu26a+}.
Let \(\{e_i\}_{i=1}^m\) be a local orthonormal frame of \(TM\) and \(\{e^i\}_{i=1}^m\) its dual coframe.
For the Dirac operator \(\mathcal{D}(t)\) where $t\in [0,1]$, the Bochner-Lichnerowicz-Schr\"{o}dinger-Weitzenb\"ock formula (cf.~\cite{LM89}) takes the form
\begin{equation}\label{eq:BLW}
    \mathcal{D}^2(t) = \nabla^*\nabla(t) + \mathscr{R}(t),
\end{equation}
with the connection Laplacian
\[
\nabla^*\nabla(t) := -\sum_i\bigl(\nabla_{e_i}(t)\nabla_{e_i}(t)-\nabla_{\nabla^{TM}_{e_i}e_i}(t)\bigr)
\]
and the curvature endomorphism
\begin{equation}\label{eq:curvature_endomorphism}
    \mathscr{R}(t) := \sum_{i<j} c(e^i)c(e^j)\,R^{\nabla(t)}(e_i,e_j),
\end{equation}
where \(R^{\nabla(t)}\) is the curvature tensor of \(\nabla(t)\).

By Green's formula (cf.~\cite{Taylor11}*{Proposition~9.1}), for any \(u\in C_c^{\infty}(M,S)\) we have
\begin{equation}\label{eq:Green_formula_Dirac}
    \int_M \langle \mathcal{D}(t) u, u\rangle\,dV = \int_M \langle u, \mathcal{D}(t) u\rangle\,dV - \int_{\partial M} \langle u, c(\nu^*) u\rangle\,dA,
\end{equation}
and similarly,
\begin{equation}\label{eq:Green_formula_connection}
    \int_M |\nabla(t) u|^2\,dV = \int_M \langle u,\nabla^*\nabla(t) u\rangle\,dV + \int_{\partial M} \langle u, \nabla_{\nu}(t) u\rangle\,dA.
\end{equation}

Combining \eqref{eq:BLW}, \eqref{eq:Green_formula_Dirac} and \eqref{eq:Green_formula_connection} gives, for every \(u\in C_c^{\infty}(M,S)\),
\begin{equation}\label{eq:BLW+Green}
\begin{aligned}
    \int_M |\mathcal{D}(t)u|^2\,dV = &\int_M|\nabla(t) u|^2\,dV + \int_M \langle u, \mathscr{R}(t) u\rangle\,dV \\
      &- \int_{\partial M} \langle u, c(\nu^*) \mathcal{D}(t) u + \nabla_{\nu}(t) u \rangle\,dA.
\end{aligned}
\end{equation}

Now observe that
\[
|\mathcal{B}_f(t) u|^2 = |\mathcal{D}(t) u|^2 + \langle\mathcal{D}(t)u, f\sigma u\rangle + \langle f\sigma u, \mathcal{D}(t) u\rangle + f^2|u|^2,
\]
which together with \eqref{eq:BLW+Green} yields
\begin{equation}\label{eq:integral_Callias_square}
\begin{aligned}
\int_M |\mathcal{B}_f(t) u|^2\,dV 
= & \int_M |\mathcal{D}(t) u|^2\,dV + \int_M \langle u, \underbrace{(\mathcal{D}(t) f\sigma + f\sigma \mathcal{D}(t))}_{=c({\rm d}f)\sigma} u\rangle\,dV + \int_M f^2|u|^2\,dV \\
 & - \int_{\partial M} \langle u, c(\nu^*) f\sigma u\rangle\,dA. 
\end{aligned}
\end{equation}

On the restricted bundle \(S^{\partial}:=S|_{\partial M}\), introduce a boundary Clifford multiplication and a boundary connection by
\[
   c^{\partial}(e^i):=-c(e^i)c(\nu^*),\qquad \nabla^{\partial}_{e_i}(t):=\nabla_{e_i}(t)-\frac{1}{2}c^{\partial}(\nabla_{e_i}(t)\nu^*),
\]
where now \(\{e_i\}_{i=1}^{m-1}\) is a local orthonormal frame of \(T(\partial M)\) with dual coframe \(\{e^i\}_{i=1}^{m-1}\), and \(\nu^*\) is the dual covector of the outward unit normal \(\nu=e_m\) to \(\partial M\).
The associated family of boundary Dirac operators is
\[
\mathcal{A}(t):=\sum_{i=1}^{m-1} c^{\partial}(e^i)\nabla^{\partial}_{e_i}(t).
\]
A direct computation shows
\[
\mathcal{A}(t)c(\nu^{*})=-c(\nu^{*})\mathcal{A}(t),\quad
\mathcal{A}(t)\sigma=\sigma\mathcal{A}(t),\quad
\chi\mathcal{A}(t)=-\mathcal{A}(t)\chi.
\]
Consequently, for any \(u\in C^{\infty}(M,S)\) satisfying the local boundary condition,
\begin{equation}\label{eq:vanishing_boudary_term}
    \langle u|_{\partial M}, \mathcal{A}(t) u|_{\partial M} \rangle = 0.
\end{equation}

Let \(H:=\frac{1}{m-1}\sum_{i=1}^{m-1}\langle e_i,\nabla_{e_i}\nu\rangle\) be the mean curvature of \(\partial M\) with respect to \(\nu\).
The basic boundary identity (cf.~\cite{Bar96}*{Proposition~2.2}) states
\begin{equation}\label{eq:boundary_identity}
  \mathcal{A}(t)=\frac{m-1}{2}\,H + c(\nu^{*})\mathcal{D}(t) + \nabla_{\nu}(t).
\end{equation}

For \(t\in[0,1]\) define the Penrose operators on \(S\) by
\[
\mathcal{P}_{\xi}(t)u:=\nabla_{\xi}(t)u-\frac1m c(\xi^*)\mathcal{D}(t)u,\qquad \xi\in TM.
\]
They satisfy (cf.~\cite{BHMMM15}*{Section~5.2})
\begin{equation}\label{eq:Penrose_identity}
    |\nabla(t) u|^2 = |\mathcal{P}(t)u|^2 +\frac{1}{m}|\mathcal{D}(t) u|^2.
\end{equation}
Inserting \eqref{eq:BLW+Green}, \eqref{eq:integral_Callias_square}, \eqref{eq:vanishing_boudary_term}, \eqref{eq:boundary_identity} and \eqref{eq:Penrose_identity} one obtains the integral identity
\begin{equation}\label{eq:Penrose_spectral_estimates}
\begin{aligned}
   \int_M  |\mathcal{B}_{f,s}(t) u|^2\,dV
    =& \frac{m}{m-1} \int_M \Big( |\mathcal{P}(t) u|^2 + \langle u, \mathscr{R}(t) u \rangle \Big) dV \\
     & + \int_M  \big\langle u, f^2 u +  c({\rm d} f) \sigma u \big\rangle\,dV - \int_{\partial M} \Big( sf - \frac{m}{2} H \Big) |u|^2\,dA.
\end{aligned}
\end{equation}

We now assume that \(f\) is an admissible potential and that \(u\) is a nonzero spinor with \(\mathcal{B}_{f,s}(t)u = 0\) for some \(t\in[0,1]\).
To estimate the Penrose term from below we follow~\cite{HKKZ24} and set
\(v_i(t):=c(e^i)\nabla_{e_i}(t)u + \frac{f}{m}\sigma u\).
These quantities satisfy \(\sum_{i=1}^m v_i(t)=0\). Writing \(v(t)=(v_1(t),\bar v(t))\) and applying Cauchy-Schwarz to \(\sum_{i=2}^m v_i(t)=-v_1(t)\) yields
\((m-1)|\bar v(t)|^2\ge|v_1(t)|^2\).
Since \(|\mathcal{P}(t)u|^2=\sum_{i=1}^m|v_i(t)|^2=|v_1(t)|^2+|\bar v(t)|^2\), we obtain the pointwise estimate
\begin{equation}\label{eq:Penrose_Kato}
|\mathcal{P}(t)u|^2 \ge \frac{m}{m-1}|v_1(t)|^2
    = \frac{m}{m-1}\Bigl|\nabla_{e_1}(t)u - \frac{f}{m}c(e^1)\sigma u\Bigr|^2.
\end{equation}

Choose a constant \(\gamma>0\) with \(\alpha:=\frac{m}{m-1}-\frac{1}{4\gamma}>0\).
From \eqref{eq:Penrose_Kato} we expand
\[
\begin{aligned}
|\mathcal{P}(t)u|^2
&\ge \frac{m}{m-1}|\nabla_{e_1}(t)u|^2
   -\frac{2}{m-1}f\langle\nabla_{e_1}(t)u,\,c(e^1)\sigma u\rangle
   +\frac{1}{m(m-1)}f^2|u|^2.
\end{aligned}
\]
Completing the square in the first two terms on the right gives
\[
\begin{aligned}
|\mathcal{P}(t)u|^2 = &
\frac{1}{4\gamma}|\nabla_{e_1}(t)u|^2 
+ \alpha\Bigl|\nabla_{e_1}(t)u - \frac{1}{\alpha(m-1)}f\,c(e^1)\sigma u\Bigr|^2 \\
&+ \underbrace{\Bigl(\frac{1}{m(m-1)} - \frac{1}{\alpha(m-1)^2}\Bigr)}_{=:\alpha_1} f^2|u|^2.
\end{aligned}
\]
Dropping the nonnegative squared term yields
\[
|\mathcal{P}(t)u|^2 \ge \frac{1}{4\gamma}|\nabla_{e_1}(t)u|^2 + \alpha_1 f^2|u|^2.
\]

Kato's inequality states \(|\nabla(t)u|\ge |\nabla|u||\) almost everywhere.
At points where \(|u|\) is differentiable we pick a local orthonormal frame with \(e_1\) along the gradient direction, so that \(|\nabla_{e_1}|u||=|\nabla|u||\) (if \(\nabla|u|=0\) the inequality is trivial).
Thus for every \(t\in[0,1]\) we have the pointwise bound
\begin{equation}\label{eq:Kato_Penrose}
    |\mathcal{P}(t) u|^2 \ge \frac{1}{4\gamma} |{\rm d}|u||^2 + \alpha_1 f^2 |u|^2
\end{equation}
almost everywhere on \(M\).

Another application of Green's formula provides the identity (cf.~\cite{HKKZ24}*{(3.12)})
\begin{equation}\label{eq:identity_for_spectral_estimates}
\int_{\partial M} \langle c(\nu^*) u, f\sigma u\rangle\,dA = -\int_M \bigl(2f^2|u|^2 + \langle u, c(\mathrm{d}f)\sigma u\rangle\bigr)\,dV.
\end{equation}

Combining \eqref{eq:Penrose_spectral_estimates}, \eqref{eq:Kato_Penrose} and \eqref{eq:identity_for_spectral_estimates}, for any \(u\in\ker(\mathcal{B}_{f,s}(t))\) with \(t\in[0,1]\) we finally obtain
\begin{equation}\label{eq:spectral_flow_connection_spectral_estimate}
\begin{aligned}
\int_{\partial M} \Big( \alpha_2 sf - \frac{m}{2} H \Big) |u|^2\,dA
\ge & \frac{m}{m-1} \int_M \Big( \frac{1}{4\gamma} |{\rm d}|u||^2 + \langle u, \mathscr{R}(t) u \rangle \Big) dV \\
     & + \int_M  \big\langle u, \alpha_2 f^2 u +  \alpha_2 c({\rm d}f) \sigma u \big\rangle\,dV,
\end{aligned}
\end{equation}
where \(\alpha_2:=1-\frac{m\alpha_1}{m-1}\).
Because \(\alpha>0\), we have \(\alpha_1=\frac{1}{m(m-1)}-\frac{1}{\alpha(m-1)^2}<\frac{1}{m(m-1)}\), and therefore
\[
 \alpha_2 = 1-\frac{m\alpha_1}{m-1} > 1-\frac{1}{(m-1)^2} \geq 0.
\]
An analogous spectral estimate holds for a single Callias operator:
for any \(u\in\ker(\mathcal{B}_{f,s})\),
\begin{equation}\label{eq:even_connection_spectral_estimate}
\begin{aligned}
\int_{\partial M}\Bigl(\alpha_2 sf-\frac{m}{2}H\Bigr)|u|^2\,dA
&\ge \frac{m}{m-1}\int_M\!\Bigl(\frac1{4\gamma}|\mathrm{d}|u||^2+\langle u,\mathscr{R}u\rangle\Bigr)dV \\
&\quad + \int_M\bigl\langle u,\;\alpha_2 f^2 u+\alpha_2 c(\mathrm{d}f)\sigma u\bigr\rangle\,dV,
\end{aligned}
\end{equation}
where \(\mathscr{R}\) is defined in \eqref{eq:curvature_endomorphism}.


\section{\texorpdfstring{Proof of Theorem~\ref{mthm:codimension_one}}{~}}\label{sec:codimension_one}

In this section we prove Theorem~\ref{mthm:codimension_one}. We begin with an
estimate involving an auxiliary parameter \(\gamma\in (\frac{m-1}{4m},\infty)\).

\begin{proposition}\label{pro:spectrl_scalar_vertical}
    Let $(M,g_M)$ be an $m$-dimensional complete noncompact Riemannian spin manifold with at least two ends. 
    If $\gamma\in (\frac{m-1}{4m},\infty)$, then
    \begin{equation}\label{eq:spectrl_scalar_vertical}
         \inf_M \scalM + \gamma^{-1} \lambda_1(M,g_M) \leq \frac{2m(m-1)}{\Avcw(M)}.
    \end{equation}
\end{proposition}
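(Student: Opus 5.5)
We argue by contradiction. Suppose that for some $\gamma>\frac{m-1}{4m}$ the inequality \eqref{eq:spectrl_scalar_vertical} fails. Then we can choose a closed partitioning hypersurface $N$, a vertical $\widehat{A}$-admissible bundle $\mathcal{E}$ (for $m$ odd) or pair $(\mathcal{E},\rho_M)$ (for $m$ even), and a small $\varepsilon>0$ such that
\[
\inf_M \scalM + \gamma^{-1}\lambda_1 > 2m(m-1)\kappa_0+\varepsilon ,
\]
where $\kappa_0:=\|R^{\mathcal{E}}\|_\infty$, respectively $\kappa_0:=\|R^{\mathcal{E}}\|_\infty+\|R^{\rho_M}\|_\infty$. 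This is possible by the definitions of $\Avcw(M)$ and $\Avcw(M,N)$. The Clifford-contraction estimate gives the pointwise bound
\[
\mathscr{R}(t)\ge \tfrac14\scalM-\tfrac{m(m-1)}{2}\kappa_0 .
\]
For $m$ odd we use the single operator $\mathcal{B}_f$ on $\slashed S_M\otimes\mathcal{E}$. For $m$ even we use the family $\mathcal{B}_f(t)$ twisted by $\rho_M$ and the spectral flow, since $\|R^{\nabla^{\mathcal{E}_0}(t)}\|\le\|R^{\rho_M}\|$. The manifold has no boundary here, so the boundary terms in \eqref{eq:spectral_flow_connection_spectral_estimate} and \eqref{eq:even_connection_spectral_estimate} vanish.

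The topological input is an index, or spectral flow, computation. Take an admissible potential $f$ with $f=\pm\Lambda$ on the ends of $M_\pm$, chosen of size $\Lambda$ outside a compact neighbourhood of $N$ and with a sign change across $N$. The Callias-type index theorem of \cite{CZ24} (odd case) and its spectral-flow analogue \cite{Shi24+} (even case) then give
\[
\ind(\mathcal{B}_f)=\int_N\Af(N)\wedge\ch(\mathcal{E}|_N)\ne0 ,
\]
respectively $\sflow(\mathcal{B}_f,\rho_M)=\int_N\Af(N)\wedge\ch(\mathcal{E}|_N)\wedge\ch(\rho_M|_N)\ne0$. In either case there exist $t\in[0,1]$ and a nonzero $u\in\ker\mathcal{B}_f(t)$. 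Since $\mathcal{E}$ and $\rho_M$ are trivial at infinity, these invariants do not depend on the particular choice of $f$, provided its sign pattern is fixed.

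The analytic step applies \eqref{eq:spectral_flow_connection_spectral_estimate} to this $u$ with the given $\gamma$. The variational characterization \eqref{defn:bottom_spectrum}, applied to $|u|$ (which is in $L^2$ and Lipschitz, with decay justified by the invertibility of $\mathcal{B}_f$ at infinity), gives
\[
\int|\D|u||^2\ge\lambda_1\int|u|^2 .
\]
This yields
\[
0\ge \frac{m}{4(m-1)}\int\Bigl(\gamma^{-1}\lambda_1+\scalM-2m(m-1)\kappa_0\Bigr)|u|^2+\alpha_2\int\bigl(f^2-|\D f|\bigr)|u|^2 .
\]
The first integrand is at least $\varepsilon$. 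It then remains to choose $f$ with $f^2-|\D f|\ge -\delta$ for some $\delta\ll\varepsilon$. This contradicts $u\neq 0$ and proves the proposition; Theorem~\ref{mthm:codimension_one} then follows by letting $\gamma\downarrow\frac{m-1}{4m}$.

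The main obstacle is constructing $f$. It must change sign across $N$, so it vanishes somewhere, and there $f^2-|\D f|\approx-|\D f|$. We therefore need a potential with arbitrarily small Lipschitz constant that still changes sign. The plan is to take $f=\eta\,\phi(d_N)$, where $d_N$ is the signed distance to $N$ and $\phi(s)=\tanh(\eta s)$-type, cut off to a constant far out, with $\eta\to0$. Then $|\D f|\le\eta^2$ while $f^2\ge0$, so the error term is $O(\eta^2)$ and can be absorbed into $\varepsilon$. Two points need checking. First, the scaling must be arranged so that $f$ is a nonzero constant near infinity on each end; this holds because $M$ has at least two ends and $N$ separates them. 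Second, the index must remain nonzero for every $\eta$, which holds by deformation invariance of the Callias index and spectral flow under changes of $f$ that keep it invertible at infinity.
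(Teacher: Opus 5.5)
Your proposal is correct and follows essentially the same route as the paper. A nonvanishing index (odd $m$) or spectral flow (even $m$) gives a kernel element. The Penrose--Kato spectral estimate, the Clifford bound $-\frac{m(m-1)}{2}\kappa_0$ and the Rayleigh quotient for $\lambda_1$ then give the inequality up to an error coming from $f^2-|\mathrm{d}f|$, and this error is removed by shrinking the potential.

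The differences are cosmetic. The paper argues directly rather than by contradiction: it derives the inequality with an explicit $O(\varepsilon)$ error and lets $\varepsilon\to0$. It also simply takes $f=\varepsilon\psi$ for a fixed cut-off $\psi$, which already has Lipschitz constant $O(\varepsilon)$. Your $\tanh$ construction with $|\mathrm{d}f|\le\eta^2$ is therefore unnecessary, though harmless.
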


\begin{proof}

It suffices to show that for any partition $(M,N)$ of $M$,
\[
  \inf_M \scalM + \gamma^{-1} \lambda_1(M,g_M) \leq \frac{2m(m-1)}{\Avcw(M,N)}.
\]

We treat odd and even dimensions separately.

\textbf{Case~1}: Assume that $m$ is odd.
If there exist no vertical $\widehat{A}$-admissible bundles for $(M,N)$, then $\Avcw(M, N)=0$ and there is nothing to show. Thus, let $\mathcal{E}$ be a vertical $\widehat{A}$-admissible bundle for $(M,N)$ satisfying
\begin{equation}\label{eq:relative_topo_obst}
    \int_N \Af(N) \wedge \ch(\mathcal{E}|_N) \neq 0.
\end{equation}

Let $S$ be a relative Dirac bundle over $M$ associated with $\mathcal{E}$ and let $\mathcal{D}$ be the corresponding Dirac operator on $S$.

Let $\psi:M\to [0,1]$ be a smooth function such that $\psi=1$ on $M_+$ and $\psi=-1$ outside some compact subset in the interior of $M_-$. 
For $\varepsilon>0$, let $f=\varepsilon \psi$. Then $f$ is an admissible potential on $M$ such that $f=\varepsilon$ on $M_+$, $f=-\varepsilon$ outside some compact subset in the interior of $M_-$. 
Let $V$ be a compact subset of $M$ such that $\partial V=N\cup N_-$, where $N_-$ is contained completely in the portion where $f=-\varepsilon$.

Consider the Callias operator $\mathcal{B}_{f}= \mathcal{D}+f\sigma$.
By \cite{An93}*{Corollary~1.9} (see also \cite{Rad94}), together with
\eqref{eq:relative_topo_obst} and the cohomological index formula
(cf.~\cite{AS63}; see also \cite{LM89}*{p.~256}), we obtain
\[
    \ind(\mathcal{B}_{f}) \neq 0 .
\]
Hence there exists a nonzero $u\in\ker(\mathcal{B}_{f})$. Because $M$ has
no boundary, the boundary term in
\eqref{eq:even_connection_spectral_estimate} disappears. From the spectral
estimate \eqref{eq:even_connection_spectral_estimate} and
\cite{CZ24}*{(2.20)} we deduce
\[
\begin{aligned}
    0 \geq & \frac{m}{m-1} \int_M
            \Bigl( \frac{1}{4\gamma}\,|\mathrm{d}|u||^2
                 + \frac{1}{4}\scalM\,|u|^2
                 + \langle u,\mathscr{R}^{\mathcal{E}}u\rangle \Bigr) \,dV \\
          & + \int_M \bigl\langle u,\,
                \alpha_2 f^2 u + \alpha_2\,c(\mathrm{d}f)\sigma u
                \bigr\rangle \,dV ,
\end{aligned}
\]
where $\gamma>\frac{m-1}{4m}$ and $\alpha_2>0$ are constants, and
\[
\mathscr{R}^{\mathcal{E}}=\sum_{i<j}c(e^i)c(e^j)({\rm id}_{\slashed{S}_M\oplus \slashed{S}_M} \otimes R^{\mathcal{E}}(e_i,e_j))
\]
denotes the curvature endomorphism. Note that $\mathscr{R}^{\mathcal{E}}$ depends linearly on the curvature tensor of $\mathcal{E}$, and that
\begin{equation}\label{eq:curvature_term_bound_index_case}
    \langle u, \mathscr{R}^{\mathcal{E}} u\rangle \geq - \frac{m(m-1)}{2} \|R^{\mathcal{E}}\|_{\infty} \, |u|^2 \quad \text{and}\quad \left \langle u, c({\rm d} f) \sigma u \right \rangle \geq - |{\rm d}f| \, |u|^2, 
\end{equation}
hence
\begin{align*}
    0 \geq & \frac{m}{4\gamma(m-1)} \int_M |\mathrm{d}|u||^2 \,dV \\
          & + \frac{m}{m-1} \int_M
            \Bigl( \frac{1}{4}\scalM\,|u|^2
                 - \frac{m(m-1)}{2}\,\|R^{\mathcal{E}}\|_{\infty}\,|u|^2
            \Bigr) \,dV \\
          & + \alpha_2 \int_M (f^2-|\mathrm{d}f|)\,|u|^2 \,dV .
\end{align*}

Recall that $f=\varepsilon\psi$ and $\supp(\mathrm{d}\psi)\subset V$.
Using the variational characterization
\eqref{defn:bottom_spectrum} we find
\begin{align*}
    0 \geq \Bigl( & \frac{m}{4\gamma(m-1)}\,\lambda_1(M,g_M)
                 + \frac{m}{4(m-1)}\,\inf_M\scalM \\
                & - \frac{m^2}{2}\,\|R^{\mathcal{E}}\|_{\infty}
                  - \varepsilon\alpha_2\sup_V|\mathrm{d}\psi| \Bigr)
            \|u\|_{L^2(M)}^2 ,
\end{align*}
with $\|u\|_{L^2(M)}^2 = \int_M|u|^2\,dV$. Since $\|u\|_{L^2(M)}^2>0$,
dividing by this quantity gives
\[
    \inf_M \scalM + \gamma^{-1}\lambda_1(M,g_M)
    \leq 2m(m-1)\,\|R^{\mathcal{E}}\|_{\infty}
      + \frac{4(m-1)\varepsilon\alpha_2}{m}\,\sup_V|\mathrm{d}\psi| .
\]
By taking the limit as $\varepsilon\to 0$, we obtain
\[
    \inf_M \scalM + \gamma^{-1}\lambda_1(M,g_M)
    \leq 2m(m-1)\,\|R^{\mathcal{E}}\|_{\infty} .
\]
Taking the infimum over all vertical $\widehat{A}$-admissible bundles for
$(M,N)$ produces the desired estimate
\[
    \inf_M \scalM + \gamma^{-1}\lambda_1(M,g_M)
    \leq \frac{2m(m-1)}{\Avcw(M,N)} .
\]
Therefore, we finish the proof of Case~1.

\textbf{Case~2}:
Assume that $m$ is even.
If no vertical $\widehat{A}$-admissible pair exists for $(M,N)$, then
$\Avcw(M,N)=0$ and the inequality is trivial. 
Thus, let $(\mathcal{E}, \rho_{M})$ be a vertical $\widehat{A}$-admissible pair for $(M,N)$ satisfying
\begin{equation}\label{eq:relative_topo_obst_odd}
  \int_N \Af(N) \wedge \ch\left( \left. \mathcal{E} \right|_N \right) \wedge \ch\left( \left. \rho_M \right|_N \right) \neq 0.
\end{equation}

Build the relative Dirac bundle $S$ from $\mathcal{E}$ and denote the
Dirac operator by $\mathcal{D}$. Let $f$ be the same admissible potential function
as in Case~1 and set $\mathcal{B}_f = \mathcal{D}+f\sigma$. 
We then consider the family of Callias operators
\[
  \mathcal{B}_f(t) = (1-t)\mathcal{B}_f + t\, \rho_M^{-1} \mathcal{B}_f \rho_M, \quad t \in [0,1].
\]
By \cite{Shi25+}*{Theorem~1.3}, together with \eqref{eq:relative_topo_obst_odd} and the cohomological formula for the spectral flow (cf. \cite{Ge93}*{Theorem~2.8}), we obtain
\[
  \sflow(\mathcal{B}_f, \rho_M) \neq 0.
\]
Consequently, there exist $t_0\in[0,1]$ and a nonzero
$u\in\ker(\mathcal{B}_f(t_0))$. Because $\partial M=\emptyset$, the
boundary term in \eqref{eq:spectral_flow_connection_spectral_estimate}
vanishes. Applying that estimate together with \cite{Shi25+}*{(4.1)} gives
\[
\begin{aligned}
    0 \geq & \frac{m}{m-1} \int_M
            \Bigl( \frac{1}{4\gamma}\,|\mathrm{d}|u||^2
                 + \frac{1}{4}\scalM\,|u|^2 \\
          & \qquad + \bigl\langle u,\,
                \mathscr{R}^{\mathcal{E}}u
                - 4t(1-t)\mathscr{R}^{\rho_M}u \bigr\rangle \Bigr) \,dV \\
          & + \int_M \bigl\langle u,\,
                \alpha_2 f^2 u + \alpha_2\,c(\mathrm{d}f)\sigma u
                \bigr\rangle \,dV ,
\end{aligned}
\]
where $\gamma>\frac{m-1}{4m}$ and $\alpha_2>0$ are constants, and $\mathscr{R}^{\rho_M}=\sum_{i<j}c(e^i)c(e^j)R^{\rho_M}(e_i,e_j)$.

Using the fact that
\[
    \langle u, \mathscr{R}^{\mathcal{E}} u - 4t(1-t) \mathscr{R}^{\rho_M} u \rangle \geq  - \frac{m(m-1)}{2} (\|R^{\mathcal{E}}\|_{\infty} + \|R^{\rho_M}\|_{\infty}) |u|^2,
\]
and
\[
 \langle u, c({\rm d} f) \sigma u \rangle \geq - |{\rm d}f| \, |u|^2,
\]
we have
\begin{align*}
     0 \geq & \frac{m}{4\gamma(m-1)} \int_M |{\rm d} |u||^2 dV \\
     & + \frac{m}{m-1} \int_M \left( \frac{1}{4} \scalM |u|^2 -\frac{m(m-1)}{2} (\|R^{\mathcal{E}}\|_{\infty} + \|R^{\rho_M}\|_{\infty}) |u|^2 \right) dV \\
    & + \alpha_2 \int_M  (f^2-|{\rm d}f|)|u|^2 dV.
\end{align*}

Recall that $f=\varepsilon \psi$ and note that $\supp({\rm d}\psi)\subset V$. Using \eqref{defn:bottom_spectrum}, we obtain
\begin{align*}
    0 \geq & \Big( \frac{m}{4\gamma(m-1)} \lambda_1(M,g_M) + \frac{m}{4(m-1)} \inf_M \scalM \\
     & \quad - \frac{m^2}{2} (\|R^{\mathcal{E}}\|_{\infty} + \|R^{\rho_M}\|_{\infty}) - \alpha_2 \varepsilon \sup_{V} |{\rm d} \psi| \Big)  \|u\|_{L^2(M)}^2.
\end{align*}
Since $\|u \|_{L^2(M)}^2>0$, we have
\[
  \inf_M \scalM + \gamma^{-1} \lambda_1(M,g_M) \leq 2m(m-1)(\|R^{\mathcal{E}} \|_{\infty} + \|R^{\rho_M}\|_{\infty}) + \frac{4(m-1)\alpha_2 \varepsilon}{m} \sup_{V} |{\rm d} \psi|.
\]
We let $\varepsilon \to 0$ to find that
\[
  \inf_M \scalM + \gamma^{-1} \lambda_1(M,g_M) \leq 2m(m-1) (\|R^{\mathcal{E}} \|_{\infty} + \|R^{\rho_M}\|_{\infty}).
\]
Taking the infimum over all vertical $\widehat{A}$-admissible pairs for $(M,N)$ yields
\[
    \inf_M \scalM + \gamma^{-1}\lambda_1(M,g_M)
    \leq \frac{2m(m-1)}{\Avcw(M,N)} .
\]
Thus, we arrive at the result for Case~2.

Combining the two cases finishes the proof of the proposition.
\end{proof}

\begin{remark}
    For an $m$-dimensional connected oriented (not necessarily complete)
    Riemannian manifold $(M,g_M)$, the \textit{$\gamma$-spectral constant}
    introduced in~\cite{HKKZ24} is defined, for any $\gamma\in\mathbf{R}$, by
    \[
        \Lambda_{\gamma}(g_M)=
        \inf\Bigl\{ \int_M \bigl(|\mathrm{d}u|^2 + \gamma\,\scalM\,u^2\bigr)\,dV
            \colon u\in H_0^1(M),\ \int_M u^2\,dV =1 \Bigr\},
    \]
    where $H_0^1(M)$ is the completion of $C_c^\infty(M)$ in the Sobolev
    $H^1$-norm. Under the assumptions of
    Proposition~\ref{pro:spectrl_scalar_vertical}, the proof above also
    gives
    \[
        \gamma^{-1}\,\Lambda_{\gamma}(g_M) \leq \frac{2m(m-1)}{\Avcw(M)},
        \quad \text{for } \gamma\in \big(\frac{m-1}{4m},\infty\big).
    \]
\end{remark}

We are now in a position to prove Theorem~\ref{mthm:codimension_one}
by using Proposition~\ref{pro:spectrl_scalar_vertical}.

\begin{proof}[Proof of Theorem~\ref{mthm:codimension_one}]
Proposition~\ref{pro:spectrl_scalar_vertical} implies that for every
$\epsilon>0$,
\[
    \inf_M \scalM + \Bigl(\frac{m-1}{4m}+\epsilon\Bigr)^{-1}\lambda_1(M,g_M)
    \le \frac{2m(m-1)}{\Avcw(M)} .
\]
Letting $\epsilon\to0^+$ yields the desired inequality
\[
    \inf_M \scalM + \frac{4m}{m-1}\,\lambda_1(M,g_M)
    \le \frac{2m(m-1)}{\Avcw(M)} .
\]
\end{proof}


\section{\texorpdfstring{Proof of Theorem~\ref{mthm:quantitative_Anghel}}{~}}\label{sec:spectral_Anghel}

We start with a proposition needed for the proof.

\begin{proposition}\label{pro:spectral_Anghel}
Let $(M,g_M)$ be an $m$-dimensional complete noncompact Riemannian spin manifold
that can be partitioned by a closed hypersurface $N$ with nonzero
$\widehat{A}$-genus.
Suppose that $\scalM > 0$ on a compact subset $K \subset M$ containing $N$.
If $\gamma\in (\frac{m-1}{4m},\infty)$, then
\[
 \inf_{M} \scalM < -\frac{1}{\gamma}\,\lambda_1(M,g_M).
\]
\end{proposition}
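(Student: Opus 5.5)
We argue by contradiction. Suppose that
\[
\inf_M \scalM \ge -\gamma^{-1}\lambda_1(M,g_M),
\]
and produce a nonzero harmonic-type spinor whose existence contradicts positivity of $\scalM$ on $K$. Since $\widehat{A}(N)\neq 0$, the bundle $\mathcal{E}$ can be taken to be trivial and flat, so $\mathscr{R}^{\mathcal{E}}=0$. In even dimensions we take $\rho_M$ compatible and pulled back from a degree one map as in the introduction. Because $\widehat{A}(N)\ne0$ we need no twisting at all, but then the parity issue must be handled. For $m$ odd, $N$ is even dimensional and we use the index $\ind(\mathcal{B}_f)=\widehat{A}(N)\neq0$ from Anghel's theorem. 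For $m$ even, $\widehat{A}(N)$ vanishes automatically since $\dim N$ is odd, so only the odd case is genuinely relevant. Alternatively, for $m$ even, one may pass to $M\times S^1$ or use a $\rho$ of arbitrarily small curvature; we choose the first, noting that the hypothesis forces $m-1\equiv 0 \pmod 4$.

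Take the potential $f=\varepsilon\psi$ exactly as in Case~1 of the proof of Proposition~\ref{pro:spectrl_scalar_vertical}, with $\psi=1$ on $M_+$ and $\psi=-1$ outside a compact subset of $M_-$. The key modification is to arrange the transition region $\supp(\mathrm{d}\psi)$ inside $K$. This is possible since $N\subset K$ and $K$ contains a collar of $N$; the region $M_-\setminus V$ may be placed wherever needed, but we want the transition near $N$. So we choose $\psi$ changing sign within the collar $N\times(-\delta,0]\subset K$. On $K$ we have $\scalM\ge s_0>0$ by compactness.

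For a nonzero $u\in\ker\mathcal{B}_f$, estimate \eqref{eq:even_connection_spectral_estimate} with $\mathscr{R}=\frac14\scalM$ gives
\[
0\ge \frac{m}{m-1}\int_M\Bigl(\frac1{4\gamma}|\mathrm{d}|u||^2+\frac14\scalM|u|^2\Bigr)+\alpha_2\int_M(f^2-|\mathrm{d}f|)|u|^2 .
\]
Split $\scalM = \inf_M\scalM + (\scalM-\inf_M\scalM)$ and use $\int|\mathrm{d}|u||^2\ge\lambda_1\int|u|^2$ together with the contradiction hypothesis. The first two contributions are then nonnegative, leaving
\[
0\ge \int_K\Bigl(\frac{m}{4(m-1)}\bigl(\scalM-\inf_M\scalM\bigr)-\alpha_2\varepsilon|\mathrm{d}\psi|\Bigr)|u|^2+\alpha_2\varepsilon^2\int_M\psi^2|u|^2 .
\]

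\textbf{Main obstacle.} The difference $\scalM-\inf_M\scalM$ need not be positive on $K$ when $\inf_M\scalM>0$. Hence the contradiction hypothesis must be used more sharply. In the case $\inf_M\scalM\le 0$ we have $\scalM-\inf\scalM\ge s_0>0$ on $K$, so choosing $\varepsilon<\frac{m s_0}{4(m-1)\alpha_2\sup|\mathrm{d}\psi|}$ makes the integrand positive on $K$, while the last term is positive on $M$ wherever $u\neq0$. This forces $u\equiv0$, a contradiction. In the case $\inf_M\scalM>0$ we do not split at all: $\scalM\ge c>0$ globally, and the same smallness choice of $\varepsilon$ gives the contradiction directly. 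Unique continuation is not needed, since the term $\varepsilon^2\psi^2|u|^2$ is strictly positive where $u\ne0$ away from $\{\psi=0\}$, a null set.

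Finally, the equality case $\inf_M\scalM=-\gamma^{-1}\lambda_1$ is included in the contradiction hypothesis, and it also yields a contradiction because the $K$-term gives strict positivity. This delivers the strict inequality claimed.
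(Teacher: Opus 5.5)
Your proof is correct and follows essentially the paper's route. Both arguments proceed by contradiction with the trivial flat line bundle, use Anghel's index theorem to get a nonzero $u\in\ker(\mathcal{D}+\varepsilon\psi\sigma)$, apply the spectral estimate with $\mathscr{R}=\frac14\scalM$ and the variational bound for $\lambda_1$, and take $\varepsilon$ small so that $\alpha_2\varepsilon|\mathrm{d}\psi|$ is absorbed where $\scalM>0$. The only difference is bookkeeping: you split $\scalM=\inf_M\scalM+(\scalM-\inf_M\scalM)$ globally and so need a case distinction on the sign of $\inf_M\scalM$, whereas the paper keeps $\scalM$ pointwise on a collar $\overline{\mathcal{U}}\supset K$ containing $\supp(\mathrm{d}\psi)$ and uses the contradiction hypothesis only on $M\setminus\overline{\mathcal{U}}$, so no case split arises (that continuity enlargement is also what you implicitly need, since $K$ itself need not contain a collar of $N$).
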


\begin{proof}[Proof of Proposition~\ref{pro:spectral_Anghel}]
Suppose, by contradiction, that 
\begin{equation}\label{eq:assumption_quantitative_Anghel}                                            
    \inf_{M} \scalM \geq -\frac{1}{\gamma} \lambda_1(M,g_M),
\end{equation}
where $\gamma>\frac{m-1}{4m}$.

By hypothesis, $N$ partitions $M$ into $M_+$ and $M_-$ with
$N = M_+ \cap M_-$. Since $\widehat{A}(N)\neq 0$, the trivial line bundle
$\mathcal{E}=M\times\mathbf{C}$ with the trivial connection is vertical
$\widehat{A}$-admissible. We construct the relative Dirac bundle $S$
using this $\mathcal{E}$ and denote by $\mathcal{D}$ the corresponding
Dirac operator on $S$ (cf.~Section~\ref{subsec:Callias}). Because the
connection on $\mathcal{E}$ is trivial, its curvature vanishes, and the
curvature endomorphism in the
Bochner-Lichnerowicz-Schr\"{o}dinger-Weitzenb\"{o}ck formula
(cf.~\cite{LM89}) reduces to
\begin{equation}\label{eq:BLW-trivial}
    \mathscr{R}= \frac{1}{4}\scalM .
\end{equation}

Since $\scalM>0$ on the compact set $K$, there exists $\sigma>0$ such that
$\scalM\ge \sigma$ on $K$.   
Let $\mathcal{U}$ be a collar neighborhood of $K$ in $M$ with boundary $\partial \overline{\mathcal{U}}=\partial_+\overline{\mathcal{U}} \sqcup \partial_- \overline{\mathcal{U}}$ such that $\scalM\geq \sigma_0$ on $\overline{\mathcal{U}}\setminus K$ for some constant $0<\sigma_0<\sigma$.

Let $\psi:M\to [0,1]$ be a smooth function such that $\psi=0$ on $K$ and $\psi=\pm 1$ on $M_{\pm}\setminus \overline{\mathcal{U}}$. 
For $\varepsilon>0$, let $f=\varepsilon \psi$. Then $f$ is an admissible potential on $M$ such that $f=0$ on $K$, $f=\pm \varepsilon$ on $M_{\pm}\setminus \overline{\mathcal{U}}$.

Consider the Callias operator $\mathcal{B}_{f}=\mathcal{D}+f\sigma$.
By \cite{An93}*{Corollary~1.9} (see also \cite{Rad94}) together with the
cohomological index formula (cf.~\cite{AS63}; see also \cite{LM89}*{p.~256}),
we obtain
\[
    \ind(\mathcal{B}_{f}) = \widehat{A}(N) \neq 0 .
\]
Hence there exists a nonzero $u\in\ker(\mathcal{B}_{f})$. As $M$ has no
boundary, the boundary term in \eqref{eq:even_connection_spectral_estimate}
vanishes. Using the spectral estimate
\eqref{eq:even_connection_spectral_estimate} and \eqref{eq:BLW-trivial} we get
\[
\begin{aligned}
    0 \geq & \frac{m}{4(m-1)} \int_M
            \Bigl( \frac{1}{\gamma}\,|\mathrm{d}|u||^2
                 + \scalM\,|u|^2 \Bigr) \,dV \\
          & + \int_M \bigl\langle u,\,
                \alpha_2 f^2 u + \alpha_2\,c(\mathrm{d}f)\sigma u
                \bigr\rangle \,dV ,
\end{aligned}
\]
where $\gamma>\frac{m-1}{4m}$ and $\alpha_2>0$ are constants.

Splitting the integrals over $K$, $\overline{\mathcal{U}}\setminus K$ and
$M\setminus\overline{\mathcal{U}}$ gives
\[
\begin{aligned}
0 \geq \; & \frac{m}{4\gamma(m-1)}\,\lambda_1(M,g_M)\,\|u\|_{L^2(M)}^2
          + \frac{m}{4(m-1)} \int_K \scalM\,|u|^2 \,dV \\
          & + \Bigl(\frac{m\sigma_0}{4(m-1)}
                   - \varepsilon\alpha_2\sup_{\overline{\mathcal{U}}\setminus K}
                     |\mathrm{d}\psi|\Bigr)
            \|u\|_{L^2(\overline{\mathcal{U}}\setminus K)}^2 \\
          & + \int_{M\setminus\overline{\mathcal{U}}}
            \Bigl(\frac{m\,\scalM}{4(m-1)} + \varepsilon^2\alpha_2\Bigr)
            |u|^2\,dV ,
\end{aligned}
\]
where $\|u\|_{L^2(A)}^2 = \int_A|u|^2\,dV$ for $A\subset M$.
On $K$ we have $\scalM\ge\sigma$, hence
\[
\begin{aligned}
0 \geq \; & \frac{m}{4\gamma(m-1)}\,\lambda_1(M,g_M)\,\|u\|_{L^2(M)}^2
          + \frac{m\sigma}{4(m-1)}\,\|u\|_{L^2(K)}^2 \\
          & + \Bigl(\frac{m\sigma_0}{4(m-1)}
                   - \varepsilon\alpha_2\sup_{\overline{\mathcal{U}}\setminus K}
                     |\mathrm{d}\psi|\Bigr)
            \|u\|_{L^2(\overline{\mathcal{U}}\setminus K)}^2 \\
          & + \Bigl(\frac{m}{4(m-1)}\inf_{M\setminus\overline{\mathcal{U}}}\scalM
                   + \varepsilon^2\alpha_2\Bigr)
            \|u\|_{L^2(M\setminus\overline{\mathcal{U}})}^2 .
\end{aligned}
\]

Choose $\varepsilon>0$ so small that
$\frac{m\sigma_0}{4(m-1)} > \varepsilon\alpha_2\sup_{\overline{\mathcal{U}}\setminus K}|\mathrm{d}\psi|$.
Splitting the first term according to
$M = \mathcal{U} \cup (M\setminus\overline{\mathcal{U}})$ yields
\[
\begin{aligned}
0 \geq \; & \frac{m}{4\gamma(m-1)}\,\lambda_1(M,g_M)\,\|u\|_{L^2(\mathcal{U})}^2
          + \frac{m\sigma}{4(m-1)}\,\|u\|_{L^2(K)}^2 \\
          & + \underbrace{\Bigl(\frac{m\sigma_0}{4(m-1)}
                   - \varepsilon\alpha_2\sup_{\overline{\mathcal{U}}\setminus K}
                     |\mathrm{d}\psi|\Bigr)}_{>0}
            \|u\|_{L^2(\overline{\mathcal{U}}\setminus K)}^2 \\
          & + \underbrace{\Bigl(\frac{m}{4(m-1)}\inf_{M\setminus\overline{\mathcal{U}}}\scalM
                     + \frac{m}{4\gamma(m-1)}\lambda_1(M,g_M)
                     + \varepsilon^2\alpha_2\Bigr)}_{>0\ \text{by } \eqref{eq:assumption_quantitative_Anghel}}
            \|u\|_{L^2(M\setminus\overline{\mathcal{U}})}^2 .
\end{aligned}
\]
All terms on the right-hand side are nonnegative, and some of them have
strictly positive coefficients. This forces $u$ to vanish on
$K \cup (\overline{\mathcal{U}}\setminus K) \cup (M\setminus\overline{\mathcal{U}}) = M$,
a contradiction.
This completes the proof.
\end{proof}

Using Proposition~\ref{pro:spectral_Anghel}, we now prove Theorem~\ref{mthm:quantitative_Anghel}.

\begin{proof}[Proof of Theorem~\ref{mthm:quantitative_Anghel}]
From Proposition~\ref{pro:spectral_Anghel},
for all $\gamma > \frac{m-1}{4m}$ we have
\begin{equation}\label{eq:temp}
\inf_M \scalM < -\frac{1}{\gamma} \lambda_1(M,g_M).
\end{equation}
Note that $\lambda_1(M,g_M) \ge 0$. We will separate the argument into the following two cases.

\textbf{Case~1}: Assume that $\lambda_1(M,g_M) = 0$. 
Taking any $\gamma_0>\frac{m-1}{4m}$ (for instance $\gamma_0=1$) in
\eqref{eq:temp} gives
\[
  \inf_M \scalM < 0.
\]

\textbf{Case~2}: Assume that $\lambda_1(M,g_M)>0$. 
Since \eqref{eq:temp} holds for every $\gamma > \frac{m-1}{4m}$,
$\inf_M \scalM$ is a lower bound of the set $\bigl\{-\frac{1}{\gamma}\lambda_1(M,g_M)\, \colon\, \gamma > \frac{m-1}{4m}\bigr\}$.
The function $\gamma \mapsto -\frac{1}{\gamma}\lambda_1(M,g_M)$ is continuous and strictly increasing on this interval,
so its infimum is the limit as $\gamma \to (\frac{m-1}{4m})^+$, namely
\[
\inf_{\gamma > \frac{m-1}{4m}} \Big\{ - \frac{1}{\gamma}\lambda_1(M,g_M) \, \colon\, \gamma > \frac{m-1}{4m} \Big\}
= - \frac{4m}{m-1}\lambda_1(M,g_M).
\]
Because $\inf_M \scalM$ is a lower bound, the definition of infimum gives
\[
   \inf_M \scalM \le  - \frac{4m}{m-1} \lambda_1(M,g_M),
\]
which is exactly the required inequality.
\end{proof}

\begin{proof}[Alternative proof of Anghel's theorem]
Assume $M$ carries a complete metric $g_M$ with $\scalM>0$.
Choose a compact set $K\subset M$ containing $N$ (for instance, a closed
tubular neighborhood of $N$). Then $\scalM>0$ on $K$, and
Theorem~\ref{mthm:quantitative_Anghel} forces $\inf_M\scalM<0$
(in either case of the theorem), contradicting $\scalM>0$.
\end{proof}

\begin{proof}[Proof of Theorem~\ref{thm:splitting_theorem}]
Assume $\Ric_{g_M}\not\equiv 0$. Then, by a deformation result of
Kazdan~\cite{Kazdan82}, $M$ admits a complete metric of positive scalar
curvature, contradicting Anghel's theorem (and hence also
Theorem~\ref{mthm:quantitative_Anghel}). Thus $\Ric_{g_M}\equiv 0$.

A noncompact manifold that can be partitioned by a closed hypersurface with
nonzero $\widehat{A}$-genus must have at least two ends. Consequently,
$M$ contains a geodesic line, and the Cheeger-Gromoll splitting theorem
\cite{CG72} implies that $M$ is isometric to a product
$(N\times\mathbf{R},\,g_N+dt^{2})$ with $(N,g_N)$ a closed Ricci-flat
manifold. Because $M$ is spin, $N$ inherits a spin structure.
\end{proof}


\section{\texorpdfstring{Proofs of Theorem~\ref{mthm:boundary_Anghel} and Theorem~\ref{mthm:higher_calabi-yau}}{~}}\label{sec:boundary_Anghel_Calabi_Yau}

We begin with the proof of Theorem~\ref{mthm:boundary_Anghel}, which adapts the
arguments of Proposition~\ref{pro:spectral_Anghel} to the boundary setting.

\begin{proof}[Proof of Theorem~\ref{mthm:boundary_Anghel}]
Suppose, by contradiction, that 
\begin{equation}\label{eq:Anghel_boundary_assumption}
    \scalM \geq 0 \ \text{on } M.
\end{equation}

We keep the notation and the overall strategy of the proof of
Proposition~\ref{pro:spectral_Anghel}. By hypothesis, $N$ separates $M$ into
$M_+$ and $M_-$ with $N = M_+ \cap M_-$. Because $\widehat{A}(N) \neq 0$, the
trivial line bundle $\mathcal{E}=M\times\mathbf{C}$ equipped with the trivial
connection is vertical $\widehat{A}$-admissible. Construct the relative Dirac
bundle $S$ from this $\mathcal{E}$ and let $\mathcal{D}$ be the corresponding
Dirac operator (cf.~Section~\ref{subsec:Callias}). For this choice,
\begin{equation}\label{eq:boundary_BLSW}
    \mathscr{R}= \frac{1}{4}\scalM .
\end{equation}

Let $\mathcal{U}$ be a collar neighborhood of $K$ in $M$ with boundary $\partial \overline{\mathcal{U}}=\partial_+\overline{\mathcal{U}} \sqcup \partial_- \overline{\mathcal{U}}$ such that $\scalM\geq \sigma_0$ on $\overline{\mathcal{U}}\setminus K$ for some constant $0<\sigma_0<\sigma$.

Let $\psi:M\to [0,1]$ be a smooth function such that $\psi=0$ on $K$ and $\psi=\pm 1$ on $M_{\pm}\setminus \overline{\mathcal{U}}$ and near the boundary $\partial M$. 
For $\varepsilon>0$, let $f=\varepsilon \psi$. Then $f$ is an admissible potential on $M$ such that $f=0$ on $K$, $f=\pm \varepsilon$ on $M_{\pm}\setminus \overline{\mathcal{U}}$ and near the boundary $\partial M$.

We may assume that both \(M_+\) and \(M_-\) contain parts of \(\partial M\); otherwise the boundary lies on only one side and the argument is simpler.
Set
\[
\partial_+ M := \partial M \cap M_+, \qquad \partial_- M := \partial M \cap M_-,
\]
which are nonempty and compact. On the relative Dirac bundle $S$ consider the
Callias operator $\mathcal{B}_{f,s} = \mathcal{D} + f\sigma$ with the choice of signs
\[
s = +1 \ \text{on } \partial_+ M, \qquad s = -1 \ \text{on } \partial_- M .
\]

By the same reasoning used in the proof of Proposition~\ref{pro:spectral_Anghel}, we see that 
\[
\ind(\mathcal{B}_{f,s}) \neq 0 .
\]
Hence there exists a nontrivial $u \in \ker(\mathcal{B}_{f,s})$ satisfying the
local boundary condition. From \eqref{eq:even_connection_spectral_estimate}
and \eqref{eq:boundary_BLSW} we obtain
\[
\begin{aligned}
0 \geq &\; \frac{m}{4(m-1)} \int_M
        \Bigl( \frac{1}{\gamma}\,|\mathrm{d}|u||^2 + \scalM\,|u|^2 \Bigr) \,dV \\
        & + \int_M \bigl\langle u,\,
                \alpha_2 f^2 u + \alpha_2\,c(\mathrm{d}f)\sigma u
                \bigr\rangle \,dV \\
        & + \int_{\partial_+ M} \bigl( \alpha_2 f + \tfrac{m}{2}H \bigr) |u|^2 \,dA
          + \int_{\partial_- M} \bigl( -\alpha_2 f + \tfrac{m}{2}H \bigr) |u|^2 \,dA,
\end{aligned}
\]
where \(\gamma>\frac{m-1}{4m}\) and \(\alpha_2>0\) are constants.
On $\partial_+M$ we have
$f = \varepsilon$ and $H \ge 0$, hence $\alpha_2 f + \frac{m}{2}H \ge 0$; on
$\partial_-M$ we have $f = -\varepsilon$ and $H \ge 0$, hence
$-\alpha_2 f + \frac{m}{2}H \ge 0$. Thus both boundary integrals are
nonnegative --- this is exactly where the mean-convexity hypothesis $H \ge 0$
enters.

Combining this with the fact that $\langle u, c({\rm d} f) \sigma u \rangle \geq - |{\rm d}f| \, |u|^2$ yields the estimate
\[
\begin{aligned}
0 \geq & \frac{m}{4\gamma(m-1)} \int_M |\mathrm{d}|u||^2 \,dV
        + \frac{m}{4(m-1)} \int_K \scalM\,|u|^2 \,dV \\
        & + \int_{M\setminus K}
          \Bigl( \frac{m\,\scalM}{4(m-1)} + \alpha_2 f^2
               - \alpha_2|\mathrm{d}f| \Bigr) |u|^2 \,dV .
\end{aligned}
\]
Recall that $f = \varepsilon\psi$ and $\supp(\mathrm{d}\psi) \subset
\overline{\mathcal{U}}\setminus K$. Hence
\begin{equation}\label{eq:Anghel_boundary_spectral_estimate}
\begin{aligned}
0 \geq & \frac{m}{4\gamma(m-1)}\,\|\mathrm{d}|u|\|_{L^2(M)}^2
        + \frac{m\sigma}{4(m-1)}\,\|u\|_{L^2(K)}^2 \\
        & + \Bigl( \frac{m\sigma_0}{4(m-1)}
                 - \varepsilon\alpha_2\sup_{\overline{\mathcal{U}}\setminus K}
                   |\mathrm{d}\psi| \Bigr)
          \|u\|_{L^2(\overline{\mathcal{U}}\setminus K)}^2 \\
        & + \underbrace{\Bigl( \frac{m}{4(m-1)}\inf_{M\setminus\overline{\mathcal{U}}}\scalM
                     + \varepsilon^2\alpha_2 \Bigr)}_{>0\ \text{by } \eqref{eq:Anghel_boundary_assumption}}
          \|u\|_{L^2(M\setminus\overline{\mathcal{U}})}^2 .
\end{aligned}
\end{equation}
Choose $\varepsilon > 0$ so small that
$\frac{m\sigma_0}{4(m-1)} > \varepsilon\alpha_2\sup_{\overline{\mathcal{U}}\setminus K}|\mathrm{d}\psi|$.
All four terms on the right-hand side of
\eqref{eq:Anghel_boundary_spectral_estimate} are nonnegative, and some of
them have strictly positive coefficients. Hence $u$ must vanish on
$K \cup (\overline{\mathcal{U}}\setminus K) \cup (M\setminus\overline{\mathcal{U}}) = M$,
contradicting the fact that $u$ is nontrivial.
\end{proof}

Corollary~\ref{cor:boundary_vertical} follows immediately from
Theorem~\ref{mthm:boundary_Anghel}.

\begin{proof}[Proof of Corollary~\ref{cor:noncompact_band_obstruction}]
By the collar neighborhood theorem, a closed hypersurface with nonzero
$\widehat{A}$-genus partitions $M$ near the boundary component whose
$\widehat{A}$-genus is nonzero. The conclusion then follows from
Corollary~\ref{cor:boundary_vertical}.
\end{proof}

We now prove the rigidity theorem for nonnegative scalar curvature and
mean-convex boundary.

\begin{proof}[Proof of Theorem~\ref{thm:scalar_mean_rigidity_all_dimensions}]
If $\Ric_{g_M}\not\equiv 0$, then by a deformation result of Cruz-Santos
\cite{CS25+}*{Theorem~1.1} the manifold $M$ would carry a complete metric with
positive scalar curvature and mean-convex boundary, contradicting
Corollary~\ref{cor:boundary_vertical}. Thus $\Ric_{g_M}\equiv 0$.

Were $\partial M$ disconnected, a splitting theorem for manifolds with boundary
due to Ichida \cite{Ichida81} (see also Kasue \cite{Kas83}*{Theorem~B} and
Croke-Kleiner \cite{CK92}*{Theorem~1}) would imply that $M$ is compact,
contradicting the noncompactness hypothesis. Hence $\partial M$ is connected.

Since $\Ric_{g_M}\equiv 0$ and $\partial M$ is connected, a boundary splitting
theorem of Kasue \cite{Kas83}*{Theorem~C} (see also Croke-Kleiner
\cite{CK92}*{Theorem~2}) applies. We conclude that $M$ is isometric to the
product $(\partial M \times [0,\infty),\, g_{\partial M} + dt^{2})$, with
$(\partial M, g_{\partial M})$ Ricci‑flat.
\end{proof}

We end this section with the proof of the Calabi-Yau type theorem in higher
dimensions.

\begin{proof}[Proof of Theorem~\ref{mthm:higher_calabi-yau}]
We argue by contradiction. Suppose that $(M,g_M)$ is noncompact and has
sublinear volume growth. Let $K_0\subset M$ be a compact set outside which
the scalar curvature is nonnegative.

By Lemma~\ref{lem:zhu} applied to $K_0$, there exists a bounded region
$\Omega\supset K_0$ such that $\partial\Omega$ is mean-concave with respect to
the outer unit normal of $\Omega$. Consequently, $\partial\Omega$ is mean-convex
when regarded as the boundary of the complement $M\setminus\Omega$.

Since $M$ is partitioned at infinity by a closed hypersurface with nonzero
$\widehat{A}$-genus, the definition applied to the compact set
$\overline{\Omega}$ yields a compact set $L \supset \Omega$ and a separating
closed hypersurface $N \subset M\setminus L$ with $\widehat{A}(N) \neq 0$.

Now set $W:=M\setminus\Omega$. The hypersurface $N$ lies in $M\setminus L$ and
is disjoint from the compact collar $L\setminus\Omega$; hence $N$ also separates $W$.
Thus $W$ is a complete noncompact manifold with
compact mean-convex boundary $\partial W=\partial\Omega$, nonnegative scalar
curvature, and it is separated by a closed hypersurface with nonzero
$\widehat{A}$-genus.

Applying Theorem~\ref{thm:scalar_mean_rigidity_all_dimensions} to $W$ shows
that $W$ is isometric to the product
\[
    (\partial W\times[0,\infty),\, g_{\partial W}+dt^{2}),
\]
where $(\partial W, g_{\partial W})$ is a compact Ricci-flat manifold. In
particular, $W$ has linear volume growth.

Because $\Omega$ is bounded, the volume growth of $M$ coincides with that of
$W$, contradicting the assumption that $M$ has sublinear volume growth.
\end{proof}


\vspace{3mm}
\textbf{Acknowledgements.} 
The author is deeply grateful to Professor Weiping Zhang, Professor Guangxiang Su, Professor Zhenlei Zhang and Professor Bo Liu for their continuous encouragement and support.
This work is partially supported by 
the National Natural Science Foundation of China (12501064), the China Postdoctoral Science Foundation (2025M773075, 2025T002TJ, GZC20252016) and the Nankai Zhide Foundation.

\end{document}